\documentclass[11pt]{article}
\usepackage{arxiv}

\usepackage{pgfplots,tikz,url,todonotes}

\usepackage{fullpage}
\usepackage[hidelinks]{hyperref}

\usepackage{amsmath,amssymb,amsthm}
\usepackage[capitalize]{cleveref}
\usepackage{mathtools}
\usepackage{graphicx}
\usepackage{enumitem}

\pgfplotsset{compat=1.10}

\usepackage{palatino}
\usepackage{mathpazo}
\usepackage{fullpage}
\usepackage[margin=1cm]{caption}

\usepackage{thmtools}
\usepackage{thm-restate}

\newcommand{\ignore}[1]{}
\newcommand{\N}{\mathbb{N}}

\newcommand{\R}{\mathbb{R}}

\newcommand{\mK}{\operatorname{\mathcal K}}

\renewcommand{\sup}{\mathrm{sup}}
\renewcommand{\min}{\mathrm{min}}

\renewcommand{\max}{\mathrm{max}}
\renewcommand{\epsilon}{\varepsilon}
\newcommand{\eps}{\epsilon}

\newcommand{\SoS}{\Sigma[x]}

\def\01{\{0,1\}}

\newtheorem{defin}{Definition} %[section]

\newtheorem{proposition}[defin]{Proposition}
\newtheorem{theorem}{Theorem}
\newtheorem*{theorem*}{Theorem}
\newtheorem{remark}[defin]{Remark}

\newtheorem{lemma}[defin]{Lemma}

\newtheorem*{claim*}{Claim}

\newtheorem*{conjecture*}{Conjecture}
\theoremstyle{definition}

\newcommand{\mon}{{1,\mathrm{mon}}}
\newcommand{\cheb}{{1,\mathrm{cheb}}}

\newcommand{\x}{\mathbf{x}}
\newcommand{\y}{\mathbf{y}}

\newcommand{\Chebyconst}{\mathfrak{c}}

\date{\today}
\begin{document}

\title{Squared polynomial approximation kernels for the hypercube: improved error bounds and implications for Lasserre hierarchies}
\author{Sander Gribling\thanks{S.J.Gribling@tilburguniversity.edu} \and Etienne de Klerk\thanks{E.deKlerk@tilburguniversity.edu} \and Juan C.\ Vera\thanks{J.C.VeraLizcano@tilburguniversity.edu} }
\date{Department of Econometrics and Operations Research, Tilburg University, The Netherlands}

\maketitle

\begin{abstract}
We propose a new family of polynomial approximation kernels for approximating nonnegative polynomials on the hypercube  $[-1,1]^n$. Our Kernels produce polynomial sums-of-squares 
of degree $r$, achieving an $O(\log^3 r/r^2)$  error in the $\ell_1$-norm of the coefficients.
This improves on the known error bound $O(1/r)$ from the literature.
As a corollary, we obtain an improved convergence rate for the Lasserre hierarchy for polynomial optimization on the hypercube, again improving a known rate by Baldi and Slot from  $O(1/r)$ to $O(\log^3 r/r^2)$.

\keywords{Polynomial kernel method,  semidefinite programming,  Positivstellensatz,  Lasserre hierarchy}
\end{abstract}

\section{Introduction}
We consider the set of multivariate polynomials of degree at most $d$ in the variables $\x = (x_1,\ldots,x_n)$, denoted by $\mathbb{R}[\x]_d$, that are also nonnegative on the hypercube
$[-1,1]^n$.
A natural question is how well such a polynomial may be approximated (in a suitable norm) by a sum-of-squares of polynomials with total degree $r$; we will denote the cone of such polynomials by $\Sigma[\x]_r$.
Thus we are interested in projections of polynomials onto $\Sigma[\x]_r$, using a suitable norm.
To introduce one widely-studied  norm, we will write a polynomial $f \in \mathbb{R}[\x]_d$ in the standard monomial basis
as
\[
f(\x) = \sum_{\alpha \in \mathbb{N}^n_d} f_\alpha \x^\alpha,
\]
where
$\mathbb{N}^n_d = \{\alpha \in (\mathbb{N}_0)^n \; : \;  \sum_{i=1}^n \alpha_i \le d\}$, and
$\x^\alpha := x_1^{\alpha_1}\cdots x_n^{\alpha_n}$. We may then define the $1$-norm of $f$ in the monomial basis as
\[
\|f\|_\mon := \sum_{\alpha \in \mathbb{N}^n_d} |f_\alpha|.
\]

Thus, given $f \in \mathbb{R}[\x]_d$  such that $f$ is nonnegative on $[-1,1]^n$,  we consider the projection onto
$\Sigma[\x]_r$, namely
\[
\mbox{Proj}^{\|\cdot\|_\mon}_{\Sigma_r[\x]}(f) := \arg\min_{p \in \Sigma_r[\x]} \|f-p\|_\mon,
\]
and ask how the error $\left\|\mbox{Proj}^{\|\cdot\|_\mon}_{\Sigma_r[\x]}(f)-f\right\|_\mon$ depends on $r$. It is known that the cone of sums of squared polynomials is dense in the cone of polynomials that are nonnegative on $[-1,1]^n$ in the $\|\cdot\|_\mon$ norm; see, e.g., Berg \cite[Theorem 5, p. 117]{Berg}.
Lasserre \cite{doi:10.1137/04061413X} (see also \cite{Lasserre_Netzer}) gave a constructive proof of this density result, and later also gave the following characterization of the projection in the unpublished preprint \cite{Lasserre_arXiv}.
\begin{theorem}[Lasserre \cite{Lasserre_arXiv}]
Assume $f \in \mathbb{R}[\x]_d$ and $r \ge d$ and $r$ is even.
Then there exist nonnegative
$\lambda^*_0, \ldots, \lambda_n^*$, such that, for the $\|\cdot\|_\mon$ norm,
\[
\mbox{\rm Proj}^{\|\cdot\|_\mon}_{\Sigma_r[\x]}(f)(\mathbf{x}) =  f(\mathbf{x}) + \lambda_0^* + \sum_{i=1}^n \lambda^*_i x_i^{r},
\]
i.e.\ $\|\mbox{\rm Proj}^{\|\cdot\|_\mon}_{\Sigma_r[\x]}(f)-f\|_\mon = \sum_{i=0}^n \lambda^*_i$.
The values $\lambda^*_0, \ldots, \lambda_n^*$ are given by the optimal solution of the semidefinite programming (SDP) problem
\[
 \min_{\lambda \ge 0} \left\{ \sum_{i=0}^n \lambda_i \; : \; \x \mapsto f(\mathbf{x}) +
 \lambda_0 + \sum_{i=1}^n \lambda_i x_i^{r} \in \Sigma[\mathbf{x}]_{r}\right\}.
\]
\end{theorem}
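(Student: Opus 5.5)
The plan is to prove two inequalities between the projection error $\min_{p\in\Sigma[\x]_r}\|f-p\|_\mon$ and the optimal value $\lambda^\ast$ of the SDP in the statement. The upper bound $\le$ is immediate: for any feasible $\lambda$, the polynomial $p=f+\lambda_0+\sum_i\lambda_i x_i^r$ lies in $\Sigma[\x]_r$. Since $r\ge d$, the monomials $1$ and $x_i^r$ are distinct basis elements, or merge with coefficients of $f$ only in a way that does not matter for the norm of $p-f$. Hence $\|p-f\|_\mon=\sum_i\lambda_i$. The SDP has a feasible point (see the next paragraph), its feasible set is closed, and the objective is coercive on $\lambda\ge0$. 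So the minimum is attained at some $\lambda^*$, and the projection error is at most $\sum_i\lambda_i^*$.

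For the reverse inequality, take any $p\in\Sigma[\x]_r$ and set $g=p-f=\sum_\alpha g_\alpha \x^\alpha$. The goal is $\lambda_0:=\sum_{\alpha\neq 0}|g_\alpha|$-type quantities such that $f+\lambda_0+\sum_i\lambda_i x_i^r$ is SOS and $\sum_i\lambda_i\le\|g\|_\mon$. The key lemma I would prove first: for every $\alpha\in\mathbb{N}^n_r$ with $|\alpha|\le r$ ($r$ even) and every sign $\pm$, the polynomial
\[
 \pm \x^\alpha + \Big(1-\tfrac{|\alpha|}{r}\Big) + \sum_{i=1}^n \tfrac{\alpha_i}{r}\, x_i^{r}
\]
is a sum of squares of degree $r$. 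This is the weighted AM--GM inequality $|\x^\alpha|\le \sum_i \frac{\alpha_i}{r}x_i^r+(1-\frac{|\alpha|}{r})\cdot 1$ in the variables $x_i^2$ and $1$. I would certify it by Hurwitz/Reznick's result that AM--GM forms of the type $\sum_j w_j y_j^{r}-\prod y_j^{rw_j}$ (even-degree agiforms with a simplicial support containing the lattice points) are SOS, or by an explicit inductive binomial construction, $x^a y^b$ via $ \tfrac12(x^{k}\mp y^{k})^2$ steps. Odd-exponent monomials are handled by the sign $\pm$, since the odd terms in the squares cancel appropriately.

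With the lemma, write $f+\|g\|_\mon$-combination: $f = p - g$, so
\[
 f + \sum_\alpha |g_\alpha|\Big[(1-\tfrac{|\alpha|}{r})+\sum_i\tfrac{\alpha_i}{r}x_i^r\Big] = p + \sum_\alpha |g_\alpha|\Big[-\mathrm{sgn}(g_\alpha)\x^\alpha + (1-\tfrac{|\alpha|}{r})+\sum_i \tfrac{\alpha_i}{r}x_i^r\Big],
\]
which is SOS of degree $r$. Reading off $\lambda_0=\sum_\alpha|g_\alpha|(1-|\alpha|/r)$ and $\lambda_i=\sum_\alpha|g_\alpha|\alpha_i/r$ gives $\sum_i\lambda_i=\sum_\alpha|g_\alpha|=\|p-f\|_\mon$. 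Minimizing over $p$ shows that $\lambda^*\le$ the projection error. Taking $p=0$ gives feasibility, since $f$ is nonnegative only as needed. So the optimum of the SDP equals the projection distance, and $f+\lambda_0^*+\sum_i\lambda_i^*x_i^r$ is a minimizer, i.e.\ a choice of projection.

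The main obstacle is the SOS certificate for the AM--GM lemma with the exact degree bound $r$, in particular for odd $\alpha$ and for $|\alpha|<r$ with the constant term. I would homogenize with an extra variable $x_0$, use Hurwitz's theorem on $\sum_i w_i x_i^{r}-\prod x_i^{rw_i}$ for even $r$, and dehomogenize by setting $x_0=1$.
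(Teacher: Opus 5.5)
Your proof is correct, and it takes a different route from the one behind the cited result. The paper gives no proof of this theorem; it only quotes Lasserre. The standard argument for this kind of $\ell_1$-projection statement is dual, and it runs in three steps. First, the moment dual of $\min_{q\in\Sigma[\x]_r}\|f-q\|_\mon$ is $\max\{-L_y(f): M_{r/2}(y)\succeq 0,\ |y_\alpha|\le 1 \text{ for all } \alpha\}$. Second, the inequality $|y_\alpha|\le\max\{y_0,\max_i y_{re_i}\}$ for positive semidefinite moment matrices (an elementary induction on $2\times 2$ principal minors) lets one replace the box constraints by $y_0\le 1$ and $y_{re_i}\le 1$. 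Third, the resulting moment problem is exactly the dual of the SDP in the statement, so strong duality for both problems gives equal optimal values.

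You instead stay on the primal side. The certificates $\pm\x^\alpha+(1-|\alpha|/r)+\sum_i(\alpha_i/r)x_i^r\in\Sigma[\x]_r$ convert any $p\in\Sigma[\x]_r$ explicitly into a feasible $\lambda$ with $\sum_i\lambda_i=\|p-f\|_\mon$. You therefore need no duality theory (Slater points, zero gap, dual attainment). The price is that you rely on Hurwitz's theorem instead of the elementary minor argument. In exchange, applying $L_y$ to your certificates gives the sharper bound $|y_\alpha|\le(1-|\alpha|/r)y_0+\sum_i(\alpha_i/r)y_{re_i}$.

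Some points to tighten.

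\textbf{Odd exponents.} As written, this case looks like a gap, so make it explicit. Hurwitz's theorem concerns $\frac1r\sum_{j=1}^r u_j^r-u_1\cdots u_r$ with $r$ even, where all exponents equal $1$. Set $u_j=x_i$ on consecutive blocks of sizes $\beta_0=r-|\alpha|,\beta_1=\alpha_1,\dots,\beta_n=\alpha_n$. This yields $\sum_{i=0}^n(\beta_i/r)x_i^r-x_0^{\beta_0}\x^\alpha$, and putting $x_0=1$ gives the minus sign. For the plus sign, either every $\alpha_i$ is even, so $\x^\alpha$ is already a square. Or some $\alpha_i$ is odd, and $x_i\mapsto -x_i$ swaps the two signs while fixing $x_i^r$.

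\textbf{Feasibility and the norm identity.} Your remarks here are muddled. Taking $p=0$ gives feasibility for every $f$ of degree at most $r$; no nonnegativity is needed. Also, $\|\lambda_0+\sum_i\lambda_i x_i^r\|_\mon=\sum_i\lambda_i$ simply because $1,x_1^r,\dots,x_n^r$ are distinct monomials when $r>0$; $f$ plays no role there.

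\textbf{Role of $r\ge d$.} This hypothesis is used only to ensure $\deg(p-f)\le r$, so that your lemma applies to every monomial of $p-f$.

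\textbf{Non-uniqueness.} Since $\|\cdot\|_\mon$ is not strictly convex, what you establish is that $f+\lambda_0^*+\sum_i\lambda_i^*x_i^r$ is \emph{a} projection. That is the correct reading of the statement.
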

By the aforementioned density result, one has $\left\|\mbox{\rm Proj}^{\|\cdot\|_\mon}_{\Sigma_r[\x]}(f)-f\right\|_\mon \rightarrow 0$ as $ r \rightarrow \infty$. However, there are no known upper bounds on $\left\|\mbox{\rm Proj}^{\left\|\cdot\right\|_\mon}_{\Sigma_r[\x]}(f)-f\right\|_\mon$ in terms of $r$.

\subsection*{Contributions of this paper}
In this paper, we will study the closely related projection that uses the $1$-norm of the coefficients in the {\em Chebyshev basis}, denoted by $\|\cdot\|_\cheb$; see Section \ref{sec:chebyshev polynomials}. Although the resulting norm is an upper bound on the $1$-norm in the monomial basis (see Section \ref{sec:norms of polynomials}), it facilitates the analysis to work with an orthogonal polynomial basis. In particular, working with the Chebyshev basis also allows one to derive error bounds on 
$\left\|\mbox{\rm Proj}^{\left\|\cdot\right\|_\cheb}_{\Sigma_r[\x]}(f)-f\right\|_\cheb$ in terms of $r$.

The first such result  was an $O(1/r)$ bound, implicit in the analysis of Baldi and Slot \cite{doi:10.1137/23M1555430}; see also Gribling et al.\ \cite{GdKV26} for a different proof of the same result.
Our main result in this paper is to improve this to
\[
\left\|\mbox{\rm Proj}^{\|\cdot\|_\mon}_{\Sigma_r[\x]}(f)-f\right\|_\cheb = O\left(\frac{\log^3 r}{r^2}\right);
\]
see Theorem \ref{thm:main} below for a precise statement. The main idea of our proof is to use the polynomial kernel method; see e.g.\ \cite{RevModPhys.78.275}. This method  has been used in several recent papers dealing with sum-of-squares approximation, e.g.\ \cite{Laurent2021AnEV,10.1007/s10107-020-01537-7,GdKV26}. In fact, the improvement we obtain is possible due to the construction of a better kernel than that used in \cite{GdKV26}.

Results of this type have immediate consequences for the convergence rate of the Lasserre hierarchy \cite{doi:10.1137/S1052623400366802} of lower bounds for polynomial optimization in the hypercube, as shown in \cite{GdKV26}. In particular, we  improve the best known result on the convergence rate, due to Baldi and Slot \cite{doi:10.1137/23M1555430}, from $O(1/r)$ to $O\left(\log^3 r/r^2\right)$; see Theorem~\ref{thm:new error bound lasserre lower bounds}.

In addition, our new kernel construction also allows us to analyze  a different hierarchy by Lasserre \cite{Las11} of {\em upper} bounds
for polynomial optimization on the hypercube. In particular, we give a new and constructive proof of the result of the $O(1/r^2)$ convergence rate by De Klerk and Laurent \cite{DKL MOR2}; see Proposition \ref{prop:constructive proof}. Our new proof makes explicit how the rate of convergence depends on the parameters $n$, $d$ and $\|f\|_\cheb$, where $f$ is the objective function of degree $d$.
In fact, in presenting all our results, we will mostly avoid the big-O notation, in order to make the dependence on parameters other than $r$ clear as well.

\subsection*{Outline of this paper}
Since we will work with the Chebyshev basis, we will first review some properties of these polynomials in Section \ref{sec:chebyshev polynomials}, as well as the relations between the related polynomial norms in Section \ref{sec:norms of polynomials}. To conclude the preliminary material, we review how certain rational functions may be approximated by sums-of-squares of polynomials through truncated geometric series in Section \ref{sec:divide}.

Section \ref{sec:squared kernels} contains the kernel construction that we need for the main result.
The basic idea is to use squared univariate kernels; see Section \ref{sec:Analysis of squared univariate kernels}. This construction naturally leads to rational approximations, and we therefore show in Section \ref{sec:A sum-of-squares approximation} how to approximate these rational functions by sums of squared polynomials using truncated geometric series.
In Section \ref{sec:phi} we show how to leverage knowledge of known univariate kernels in our construction;
we construct a family of kernels, some of which have desirable properties, the squared Fej\'er kernel being one of them, as shown in Section \ref{sec:Fejer}. Finally, we review a well-known technique to extend the analysis to the multivariate case by multiplying univariate kernels; see Section \ref{sec:multivariate extension}.
Section \ref{sec:Main result and its consequences} contains our main result and its implications.
We first state our main result in Section \ref{sec:SOS approximations of nonnegative polynomials on the hypercube}, followed by its implications for the Lasserre hierarchies of lower and upper bounds in Sections \ref{sec:Improved rate of convergence for the Lasserre hierarchy of lower bounds} and \ref{sec:A new convergence rate proof for the Lasserre spectral hierarchy of upper bounds} respectively.
We conclude the paper in Section \ref{sec:conclusion} with a discussion of tightness of some error bounds and possible extensions of our work.

\section{Preliminaries}
\subsection{Chebyshev polynomials}
\label{sec:chebyshev polynomials}
Here we review some properties of Chebyshev polynomials for later use; more details and proofs may be found in the book by Rivlin \cite{rivlin2020chebyshev}.

We will use the  univariate   Chebyshev polynomials (of the first kind),
 defined by:
\begin{equation}\label{eqTUn}
T_k(x)= \cos(k\arccos (x)),\ \ \text{ for } \ x\in [-1,1],\ k = 0,1,\ldots.
\end{equation}
They satisfy the following three-terms recurrence relationships:
\begin{equation}\label{eqTnrec}
T_0(x)=1,\ %T_{-1}(x)=
T_1(x)=x, \ T_{k+1}(x)= 2xT_k(x)-T_{k-1}(x) \ \ \text{ for } \ k\ge 1.
\end{equation}
The Chebyshev polynomials are orthogonal with respect to the weight function
  $w(x) :=\frac{1}{ \pi\sqrt{1-x^2}}$, and one has the associated Chebyshev probability measure $d\mu(x) = w(x)dx$ on $[-1,1]$. In particular
$$
\langle T_i,T_j\rangle := \int_{-1}^1 T_i(x)T_j(x) d\mu(x)=
\left\{
\begin{array}{rl}
0 & \text{ if } i\ne j, \\
1 & \text{ if } i=j=0,\\
\frac12 & \text{ if } i=j\ge 1.
\end{array}
\right.$$
We may therefore define orthonormal Chebyshev polynomials w.r.t.\ $\mu$ via
\[
\widehat T_0(x)= T_0(x) =1,\qquad \widehat T_k(x)=\sqrt{2}\,T_k(x) \;\; (k\ge 1)
\]
so that, for \(k\ge 2\),
\[
\widehat T_{k+1}(x)+\widehat T_{k-1}(x)=2x\,\widehat T_k(x).
\]

For $\alpha \in (\mathbb{N}_0)^n$ the multivariate Chebyshev polynomial of the first kind is defined as
	\[
	T_\alpha(\textbf{x}) = \prod_{i=1}^n T_{\alpha_i}(x_i),
	\]
with  the normalized polynomial $\widehat T_\alpha$ defined analogously.

\subsection{Norms of polynomials}
\label{sec:norms of polynomials}
We define the sup-norm $\|\cdot\|_\infty$ of $f \in \R[\x]_d$ on $[-1,1]^n$ as
 \[
 \|f\|_\infty = \max_{\x \in [-1,1]^n} |f(\x)|.
 \]
If we write a multivariate, degree $d$ polynomial $f$ in the Chebyshev basis as, $f(\x) = \sum_{\alpha \in \mathbb{N}^n_d} f_\alpha T_\alpha(\x)$, then
the corresponding $\ell_1$-norm becomes $\|f\|_\cheb := \sum_{\alpha \in \mathbb{N}^n_d} |f_\alpha|$.

The following (equivalence) relations hold between the norms we have introduced:
\begin{equation}\label{norm equivalence relations}
  \|f\|_{\infty} \le \|f\|_\cheb \le \left(2^n \binom{n+d}{d}\right)^{ \frac{1}{2}}\|f\|_{\infty},
\end{equation}
see e.g.\  \cite[Lemma 2.3]{GdKV26},\footnote{The proof in \cite[Lemma 2.3]{GdKV26} is only for the univariate case ($n=1$), but the proof for general $n$ proceeds in exactly the same way.} 
and
\begin{equation}\label{norm equivalence relations2}
 \|f\|_\cheb \le \|f\|_\mon \le \left(\frac{5}{2} \right)^d \|f\|_\cheb,
\end{equation}
where the first inequality in \eqref{norm equivalence relations2} is proved, e.g. in \cite[Lemma 2.1]{De_Klerk_Vera_2024}, and the second inequality may easily be shown using induction in $d$ and the recursive relation
\eqref{eqTnrec}, as well as the sub-multiplicativity of the $1$-norm.

\subsection{Polynomial approximation of the reciprocal of a positive polynomial} \label{sec:divide}

Let $q(x)$ be a polynomial such that $0<c<q(x)\leq C$ for $x \in [-1,1]$. We construct a sum-of-squares polynomial approximation of $1/q(x)$. We do so using a geometric series and we first recall two useful identities.

For $t \neq 1$ we have
\begin{equation} \label{eq:geosum}
\sum_{m=0}^N t^m = \frac{1-t^{N+1}}{1-t}.
\end{equation}
For even $N$ we have
\[
\sum_{m=0}^{N} t^m = \frac 12\left(1+ \sum_{m=0}^{N/2 - 1}t^{2m}(1+t)^2 + t^{N}\right).
\]
In particular, for even $N$ and any polynomial $p$, we have that
\begin{equation} \label{eq:geoSOS}
\sum_{m=0}^{N} p(x)^m \in \SoS.
\end{equation}

\begin{lemma} \label{lem:ratio}
For each \(N\ge 0\), define
\[
p_{N}(x):=\frac1{C}\sum_{m=0}^{N}
\left(1-\frac{q(x)}{C}
\right)^m,
\]
Then
\[
\left\|
\frac1{q}-p_{N}
\right\|_\infty
\le
\frac{1}{c}\left(1-\frac{c}{C}\right)^{N+1}.
\]
The polynomial $p_N$ has the following properties: $\deg(p_N) = N \deg(q)$,
\begin{equation} \label{eq:geocheb}
\|p_N\|_{\cheb} \leq \frac{1}{C} \sum_{m=0}^N \left\|1-\frac{q}{C}\right\|_{\cheb}^m,
\end{equation} and, if $N$ is even, we have $p_N \in \SoS$.
\end{lemma}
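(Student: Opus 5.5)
Set $t(x) := 1 - q(x)/C$. Since $0<c<q(x)\le C$ on $[-1,1]$, we have $0 \le t(x) < 1 - c/C < 1$ there. In particular $t \ne 1$ on $[-1,1]$, so the geometric sum identity \eqref{eq:geosum} applies pointwise:
\[
p_N(x) = \frac{1}{C}\cdot\frac{1-t(x)^{N+1}}{1-t(x)} = \frac{1}{C}\cdot \frac{C}{q(x)}\bigl(1-t(x)^{N+1}\bigr) = \frac{1}{q(x)} - \frac{t(x)^{N+1}}{q(x)}.
\]
Hence, for every $x\in[-1,1]$,
\[
\Bigl|\frac{1}{q(x)} - p_N(x)\Bigr| = \frac{t(x)^{N+1}}{q(x)} \le \frac{1}{c}\Bigl(1-\frac{c}{C}\Bigr)^{N+1},
\]
using $q(x) > c$ and $0\le t(x)\le 1-c/C$. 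Taking the maximum over $[-1,1]$ gives the sup-norm bound.

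For the degree, each term $t^m$ has degree at most $m\deg(q)$. The top term $m=N$ has degree exactly $N\deg(q)$: when $\deg q\ge 1$, the leading coefficient of $t$ is $-1/C$ times that of $q$, so the leading coefficient of $t^N$ is nonzero. The lower terms cannot cancel it, since they have strictly smaller degree. (If $q$ is constant the claim reads $\deg p_N = 0$, which is trivial.)

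For \eqref{eq:geocheb}, we apply the triangle inequality to $\|\cdot\|_\cheb$ and then sub-multiplicativity, $\|gh\|_\cheb \le \|g\|_\cheb\|h\|_\cheb$. Sub-multiplicativity is the one ingredient needing a word of justification. It follows from the product formula $T_iT_j = \tfrac12(T_{i+j}+T_{|i-j|})$: in one variable each product $T_iT_j$ has Chebyshev $\ell_1$-norm at most $1$. The multivariate case follows coordinatewise, since $T_\alpha T_\beta$ factors over coordinates. Expanding $g$ and $h$ and applying the triangle inequality then gives $\|gh\|_\cheb\le \|g\|_\cheb\|h\|_\cheb$. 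Induction on $m$ yields $\|t^m\|_\cheb\le \|t\|_\cheb^m$, which gives \eqref{eq:geocheb}.

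Finally, the sum-of-squares property for even $N$ is exactly \eqref{eq:geoSOS} applied with $p = t$. Scaling by the positive constant $1/C$ preserves membership in $\SoS$.

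There is no real obstacle here. The only points needing care are the strictness of $t<1$, which justifies the division in \eqref{eq:geosum}, and the sub-multiplicativity of the Chebyshev $1$-norm.
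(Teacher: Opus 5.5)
Your proof is correct and follows essentially the same route as the paper: the finite geometric-sum identity gives the pointwise error $t^{N+1}/q$, the Chebyshev bound comes from the triangle inequality and sub-multiplicativity, and the SOS claim follows from \eqref{eq:geoSOS}. You also justify the degree claim and the sub-multiplicativity of $\|\cdot\|_\cheb$, which the paper takes for granted, and you skip the paper's unnecessary step through uniform convergence of the infinite series.
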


\begin{proof}
By assumption, for any $x \in [-1,1]$ we have $0<c\leq q(x)\leq C$. Hence $0 \leq 1-\frac{q(x)}{C} \leq 1-\frac{c}{C}<1$.
Therefore the geometric series converges uniformly on \([-1,1]\), and thus
\[
\frac1{C}\sum_{m=0}^{\infty}
\left(1-\frac{q(x)}{C}
\right)^m
 = 
\frac{1}{C} \frac{1}{1-\left(1-\frac{q(x)}{C}\right)}
=
 \frac1{q(x)}.
\]

That is, \(p_{N}\to 1/q\) uniformly.
Finally, we bound the error using \eqref{eq:geosum}: for $x \in [-1,1]$ we have
\[
\left|
\frac1{q(x)}-p_N(x)
\right|
\le
\frac{\left(1-\frac{q(x)}{C}
\right)^{N+1}}{q(x)} \leq \frac{\left(1-\frac{c}{C}\right)^{N+1}}{c}.
\]
Using the triangle inequality and sub-multiplicativity of the $1$-norm we moreover have
\begin{equation*}% \label{eq:geocheb}
\|p_N\|_{\cheb} \leq \frac{1}{C} \sum_{m=0}^N \left\|1-\frac{q}{C}\right\|_{\cheb}^m.
\end{equation*}
For $N$ even, we have $p_N \in \SoS$ using \cref{eq:geoSOS}.
\end{proof}

In what follows, we will apply Lemma \ref{lem:ratio} to polynomials $q$ with a special structure, as introduced in the next lemma.
\begin{lemma} \label{lem:1qbound}
Let $a \in [0,1]^r$ and define
\[
q(x)=1 +2\sum_{j=1}^r (1-a_j)^2 T_j(x)^2.
\]
Then for $x \in [-1,1]$ we have $q(x) \leq 1 +2\sum_{j=1}^r (1-a_j)^2 =:C$ and we have
\[
\left\|1-\frac{q}{C}\right\|_{\cheb} =  \frac{C-1}{C} \leq 1.
\]
\end{lemma}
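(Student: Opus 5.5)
The plan is to handle the pointwise bound first and then the Chebyshev-norm identity, which reduces to writing $q$ explicitly in the Chebyshev basis.

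\emph{Pointwise bound.} For $x\in[-1,1]$ we have $|T_j(x)|=|\cos(j\arccos x)|\le 1$, so $T_j(x)^2\le 1$. Since the coefficients $2(1-a_j)^2$ are nonnegative, it follows that $q(x)\le 1+2\sum_{j=1}^r(1-a_j)^2=C$. Note also that $C\ge 1$, so $C>0$ and $q/C$ is well defined.

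\emph{Chebyshev expansion of $q$.} I will use the identity $T_j(x)^2=\tfrac12\bigl(T_0(x)+T_{2j}(x)\bigr)$, which follows from $\cos^2\theta=\tfrac12(1+\cos 2\theta)$ with $\theta=\arccos x$. Substituting it gives
\[
q(x)=1+\sum_{j=1}^r(1-a_j)^2+\sum_{j=1}^r(1-a_j)^2\,T_{2j}(x).
\]
The constant term is $1+\sum_j(1-a_j)^2=\tfrac{C+1}{2}$.

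\emph{Norm computation.} From the expansion,
\[
1-\frac{q}{C}=\Bigl(1-\frac{C+1}{2C}\Bigr)T_0-\frac1C\sum_{j=1}^r(1-a_j)^2T_{2j}=\frac{C-1}{2C}\,T_0-\frac1C\sum_{j=1}^r(1-a_j)^2T_{2j}.
\]
The indices $0,2,4,\dots,2r$ are pairwise distinct, so these are the genuine Chebyshev coefficients and no cancellation between terms occurs. Moreover $C\ge1$ makes the constant coefficient nonnegative. Therefore
\[
\Bigl\|1-\frac qC\Bigr\|_\cheb=\frac{C-1}{2C}+\frac1C\sum_{j=1}^r(1-a_j)^2=\frac{C-1}{2C}+\frac{C-1}{2C}=\frac{C-1}{C},
\]
where I used $\sum_j(1-a_j)^2=\tfrac{C-1}{2}$. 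Since $C\ge 1$, we get $\tfrac{C-1}{C}\in[0,1)$, and in particular the quantity is at most $1$.

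There is no real obstacle. The only points needing care are the squaring identity for $T_j$ and checking the sign of the constant coefficient, which relies on $C\ge1$.
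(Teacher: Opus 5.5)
Your proof is correct and takes essentially the same approach as the paper. Like the paper, you bound $T_j^2\le 1$ pointwise, expand $T_j^2=\tfrac12(1+T_{2j})$ to read off the Chebyshev coefficients of $1-q/C$, and sum their absolute values to obtain $(C-1)/C$; your checks that no index $2j$ collides with the constant term and that $C\ge 1$ fixes the sign of the constant coefficient are points the paper leaves implicit.
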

\begin{proof}
Since $T_j(x)^2 \leq 1$ for all $x \in [-1,1]$ we have $q(x) \leq 1 +2\sum_{j=1}^r (1-a_j)^2$. Let $C:= 1 +2\sum_{j=1}^r (1-a_j)^2$.
Then we have
\begin{align*}
\left\|1-\frac{q}{C}\right\|_{\cheb} &= \frac1{C}\left\|C -(1+2\sum_{j=1}^r (1-a_j)^2 T_j^2)\right\|_{\cheb} \\
&= \frac1{C}\left(\left|1 +2\sum_{j=1}^r (1-a_j)^2-(1+\sum_{j=1}^r (1-a_j)^2) \right| + \sum_{j=1}^r (1-a_j)^2\right) \\
&= \frac{2\sum_{j=1}^r (1-a_j)^2}{C} = \frac{C-1}{C} \leq 1.
\end{align*}
\end{proof}

\section{A new polynomial kernel on the hypercube}
\label{sec:squared kernels}
In this section we will do a systematic analysis of a new class of approximation kernels. 
The basic idea will be to  take a univariate kernel that is a good approximation to the identity, for instance %square of classical univariate kernels like 
the Fej\'er kernel, and square it. We then analyze the squared kernel and show that it is, up to normalization and under certain conditions, close to the identity. We refer to %; see the review 
\cite{RevModPhys.78.275} for an overview of classical univariate kernels on $[-1,1]$.

\subsection{Analysis of squared univariate kernels}
\label{sec:Analysis of squared univariate kernels}
Let $a \in [0,1]^r$ and set
\[
S_r^{(a)}(x,y):=1+2\sum_{j=1}^r (1-a_j)\,T_j(x)T_j(y),
\]
and define the positive kernel
\[
K_r^{(a)}(x,y):=\bigl(S_r^{(a)}(x,y)\bigr)^2.
\]
Let \(\mK_r^{(a)}\) be the associated convolution operator
\[
\mK_r^{(a)}(f)(x):=\int_{-1}^1 f(y)\,K_r^{(a)}(x,y)\,d\mu(y),
\]
and set
\[
M_r^{(a)}(x):=\mK_r^{(a)}(1)(x)
=
1+2\sum_{j=1}^r (1-a_j)^2\,T_j(x)^2.
\]
We extend the sequence \((a_j)\) by \(a_m:=1\) for all \(m>r\) and set $a_0 := 0$.

\begin{lemma}\label{lem:SymSa} For $0 \leq k \leq r$ we have
\begin{align*}
 T_k(y)S_r^{(a)}(x,y)-T_k(x)S_r^{(a)}(x,y)
&=
a_k\bigl(T_k(y)-T_k(x)\bigr) \\
&\qquad
+\sum_{j=1}^r (a_{j+k}-a_j)
\Bigl(T_j(x)T_{j+k}(y)-T_{j+k}(x)T_j(y)\Bigr)\\
&\qquad
+\frac12\sum_{j=1}^{k-1}(a_{k-j}-a_j)
\Bigl(T_j(x)T_{k-j}(y)-T_{k-j}(x)T_j(y)\Bigr).
\end{align*}
\end{lemma}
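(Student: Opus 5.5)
The plan is a direct expansion using the Chebyshev product formula
\[
T_j(y)T_k(y)=\tfrac12\bigl(T_{j+k}(y)+T_{|j-k|}(y)\bigr),
\]
followed by reindexing and pairing terms. Write $D:=T_k(y)S_r^{(a)}(x,y)-T_k(x)S_r^{(a)}(x,y)$. By the definition of $S_r^{(a)}$,
\[
T_k(y)S_r^{(a)}(x,y)=T_k(y)+\sum_{j=1}^r(1-a_j)\,T_j(x)\bigl(T_{j+k}(y)+T_{|j-k|}(y)\bigr),
\]
and the analogous formula holds with the roles of $x$ and $y$ exchanged. Subtracting, every summand becomes an antisymmetric expression of the form $T_p(x)T_q(y)-T_q(x)T_p(y)$. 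I will group these summands into three classes: those coming from $T_{j+k}$, those from $T_{j-k}$ with $j>k$, and those from $T_{k-j}$ with $j\le k$. The trivial case $k=0$ can be checked separately: both sides vanish, using $a_0=0$ and the fact that $a_j-a_j=0$.

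\textbf{Terms with $j>k$.} For these I substitute $m=j-k$. The $T_{|j-k|}$ summand becomes
\[
(1-a_{m+k})\bigl(T_{m+k}(x)T_m(y)-T_m(x)T_{m+k}(y)\bigr),
\]
where $m$ ranges over $1,\dots,r-k$. Because of the convention $a_{m}=1$ for $m>r$, I can extend this range to $m=1,\dots,r$ at no cost. I then pair it with the $T_{j+k}$ summand with the same index, $(1-a_m)\bigl(T_m(x)T_{m+k}(y)-T_{m+k}(x)T_m(y)\bigr)$. The two combine to
\[
(a_{m+k}-a_m)\bigl(T_m(x)T_{m+k}(y)-T_{m+k}(x)T_m(y)\bigr),
\]
which is exactly the middle sum in the statement.

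\textbf{The term $j=k$.} Here $T_{|j-k|}=T_0=1$, so this summand contributes $(1-a_k)(T_k(x)-T_k(y))$. Adding the contribution $T_k(y)-T_k(x)$ from the constant $1$ in $S_r^{(a)}$ gives $a_k(T_k(y)-T_k(x))$, the first term of the statement.

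\textbf{Terms with $1\le j\le k-1$.} These contribute
\[
\sum_{j=1}^{k-1}(1-a_j)\bigl(T_j(x)T_{k-j}(y)-T_{k-j}(x)T_j(y)\bigr).
\]
I symmetrize this sum by averaging it with its image under the substitution $j\mapsto k-j$. Under that substitution the bracket changes sign while the coefficient becomes $1-a_{k-j}$. Averaging therefore yields
\[
\tfrac12\sum_{j=1}^{k-1}(a_{k-j}-a_j)\bigl(T_j(x)T_{k-j}(y)-T_{k-j}(x)T_j(y)\bigr),
\]
which is the last term of the statement.

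The only delicate step is the index bookkeeping in the $j>k$ pairing. One must check that extending the sum to $m=1,\dots,r$ via $a_m=1$ for $m>r$ is legitimate. For $m>r-k$ there is no $T_{j-k}$ partner, but the vanishing coefficient $1-a_{m+k}=0$ accounts for exactly that. One must also confirm that no terms are double counted at the boundaries $j=k$ and $j=r$.
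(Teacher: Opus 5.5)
Your proposal is correct and follows the paper's proof essentially step by step. Like the paper, you expand with $2T_jT_k=T_{j+k}+T_{|j-k|}$ and split the $T_{|j-k|}$ sum into the cases $j<k$, $j=k$, $j>k$. You then reindex the $j>k$ part using $a_m=1$ for $m>r$ so it merges with the $T_{j+k}$ sum, and symmetrize the $j<k$ part by averaging over $j\mapsto k-j$.
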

\begin{proof}
Using the product formula
\(
2T_jT_k=T_{j+k}+T_{|j-k|},
\)
we obtain
\[
T_k(y)S_r^{(a)}(x,y)
=
T_k(y)
+\sum_{j=1}^r (1-a_j)\,T_j(x)\bigl(T_{j+k}(y)+T_{|j-k|}(y)\bigr),
\]
and similarly
\[
T_k(x)S_r^{(a)}(x,y)
=
T_k(x)
+\sum_{j=1}^r (1-a_j)\,T_j(y)\bigl(T_{j+k}(x)+T_{|j-k|}(x)\bigr).
\]
Subtracting, we get
\begin{align}
\notag &T_k(y)S_r^{(a)}(x,y)- T_k(x)S_r^{(a)}(x,y) \\
\label{eq:part1}&\quad=
T_k(y)-T_k(x) \\
\label{eq:part2}&\qquad +\sum_{j=1}^r (1-a_j)
\Bigl(T_j(x)T_{j+k}(y)-T_{j+k}(x)T_j(y)\Bigr) \\
\label{eq:part3} &\qquad
+\sum_{j=1}^r (1-a_j)
\Bigl(T_j(x)T_{|j-k|}(y)-T_{|j-k|}(x)T_j(y)\Bigr).
\end{align}

We now split the sum~\eqref{eq:part3}.
\begin{align}
\notag &\sum_{j=1}^r (1-a_j)
\Bigl(T_j(x)T_{|j-k|}(y)-T_{|j-k|}(x)T_j(y)\Bigr)\\
\label{eq:sum<k}&\quad=
\sum_{j=1}^{k-1} (1-a_j)
\Bigl(T_j(x)T_{k-j}(y)-T_{k-j}(x)T_j(y)\Bigr)
\\
\label{eq:sum=k}&\qquad
+ (1-a_k)
\Bigl(T_k(x) - T_k(y)\Bigr) \\
\label{eq:sum>k}&\qquad
+ \sum_{j=k+1}^{r} (1-a_j)
\Bigl(T_j(x)T_{j-k}(y)-T_{j-k}(x)T_j(y)\Bigr)
\end{align}
Collecting~\eqref{eq:part1} and~\eqref{eq:sum=k} we obtain,
\[
T_k(y)-T_k(x)+(1-a_k)\bigl(T_k(x)-T_k(y)\bigr)
=
a_k\bigl(T_k(y)-T_k(x)\bigr).
\]
After the change of index $j \to k+j$ and using $a_j = 1$ for $j >r$ we obtain that~\eqref{eq:sum>k} is equal to
\[
\sum_{j=1}^{r} (1-a_{j+k})
\Bigl(T_{j+k}(x)T_j(y)-T_j(x)T_{j+k}(y)\Bigr).
\]
Combining this sum with~\eqref{eq:part2} we obtain
\[
\sum_{j=1}^r (a_{j+k}-a_j)
\Bigl(T_j(x)T_{j+k}(y)-T_{j+k}(x)T_j(y)\Bigr).
\]
Finally, the change of indices $j \to k-j$ in~\eqref{eq:sum<k} gives
\[
\sum_{j=1}^{k-1}(1-a_{k-j})
\Bigl(T_{k-j}(x)T_{j}(y)-T_{j}(x)T_{k-j}(y)\Bigr),
\]
averaging the two equivalent expressions gives
\[
\frac12\sum_{j=1}^{k-1}(a_{k-j}-a_j)
\Bigl(T_j(x)T_{k-j}(y)-T_{k-j}(x)T_j(y)\Bigr).
\]

Collecting the three contributions proves the claimed identity.
\end{proof}

We now study the image of the Chebyshev basis under the operator $\mK_r^{(a)}$. The goal is to show
$\mK_r^{(a)}(T_k)(x)/M_r^{(a)}(x) \approx T_k(x)$ for each $k\le r$, in a sense to be made precise.

\begin{proposition}\label{prop:boundCoeffsSqr}
Let  \(0\le k\le r\), then,
\begin{align*}
&\mK_r^{(a)}(T_k)(x)-M_r^{(a)}(x)\,T_k(x)\\
&\quad =
A_{k,r}^{(a)}\,T_k(x)
-\frac12\sum_{j=1}^r (a_{j+k}-a_j)^2\,T_{2j+k}(x)
-\frac14\sum_{j=1}^{k-1}(a_{k-j}-a_j)^2\,T_{|2j-k|}(x),
\end{align*}
where
\[
A_{k,r}^{(a)}
:=
-a_k^2
-\frac12\sum_{j=1}^r (a_{j+k}-a_j)^2
-\frac14\sum_{j=1}^{k-1}(a_{k-j}-a_j)^2.
\]
\end{proposition}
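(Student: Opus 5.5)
The plan is to write $\mK_r^{(a)}(T_k)(x)-M_r^{(a)}(x)T_k(x)$ as an integral and apply Lemma~\ref{lem:SymSa} once. Since $M_r^{(a)}(x)=\int S_r^{(a)}(x,y)^2\,d\mu(y)$, we have
\[
\mK_r^{(a)}(T_k)(x)-M_r^{(a)}(x)T_k(x)=\int_{-1}^1 S_r^{(a)}(x,y)\Bigl(T_k(y)S_r^{(a)}(x,y)-T_k(x)S_r^{(a)}(x,y)\Bigr)d\mu(y).
\]
We substitute the identity of Lemma~\ref{lem:SymSa} for the bracket. The remaining factor $S_r^{(a)}(x,y)=1+2\sum_{i}(1-a_i)T_i(x)T_i(y)$ is integrated against each term. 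By orthogonality, $\int S_r^{(a)}(x,y)T_m(y)\,d\mu(y)=(1-a_m)T_m(x)$ for every $m\ge 0$, using the conventions $a_0=0$ and $a_m=1$ for $m>r$. This is the only integration rule needed.

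Applying the rule term by term gives three pieces. The first term gives
\[
a_k\bigl((1-a_k)T_k(x)-T_k(x)\bigr)=-a_k^2T_k(x).
\]
The second term, indexed by $j$, gives
\[
(a_{j+k}-a_j)\bigl(T_j(x)(1-a_{j+k})T_{j+k}(x)-T_{j+k}(x)(1-a_j)T_j(x)\bigr)=-(a_{j+k}-a_j)^2T_j(x)T_{j+k}(x),
\]
because $(1-a_{j+k})-(1-a_j)=-(a_{j+k}-a_j)$. The third term similarly gives $-\tfrac12(a_{k-j}-a_j)^2T_j(x)T_{k-j}(x)$.

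Finally we linearize with $2T_jT_m=T_{j+m}+T_{|j-m|}$:
\[
T_jT_{j+k}=\tfrac12\bigl(T_{2j+k}+T_k\bigr),\qquad T_jT_{k-j}=\tfrac12\bigl(T_k+T_{|2j-k|}\bigr).
\]
Collecting all the $T_k$ coefficients gives $-a_k^2-\tfrac12\sum_j(a_{j+k}-a_j)^2-\tfrac14\sum_{j=1}^{k-1}(a_{k-j}-a_j)^2$, which is exactly $A_{k,r}^{(a)}$. The remaining terms are $-\tfrac12\sum_j(a_{j+k}-a_j)^2T_{2j+k}$ and $-\tfrac14\sum_{j=1}^{k-1}(a_{k-j}-a_j)^2T_{|2j-k|}$, as claimed.

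The main point needing care is the bookkeeping of the conventions. The rule $\int S T_m\,d\mu=(1-a_m)T_m$ must hold for $m=j+k>r$, where it gives $0$ because $a_m=1$, and for $m=0$. The case $m=0$ arises only when $k=0$, and then $a_0=0$ makes everything consistent. The edge case $k=0$ should be checked separately, which is easy since the first and third terms vanish. Also, $|2j-k|$ may equal $k$ only if $j=0$ or $j=k$, both outside the range $1\le j\le k-1$, so no extra $T_k$ terms are hidden in the last sum.
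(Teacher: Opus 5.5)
Your proposal is correct and is essentially the paper's proof. Like the paper, you multiply the identity of Lemma~\ref{lem:SymSa} by $S_r^{(a)}(x,y)$, integrate in $y$ using $\int T_m(y)S_r^{(a)}(x,y)\,d\mu(y)=(1-a_m)T_m(x)$ (the paper states this case by case, you unify it via $a_0=0$ and $a_m=1$ for $m>r$), and then linearize $T_jT_{j+k}$ and $T_jT_{k-j}$.

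One small slip in your bookkeeping remark: the $m=0$ case of the rule, $\int S_r^{(a)}(x,y)\,d\mu(y)=1$, is used for every $k$, namely to integrate the $y$-independent term $-a_kT_k(x)$, not only when $k=0$. You apply it correctly, so nothing breaks.
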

\begin{proof}
Multiply both sides in \cref{lem:SymSa} by \(S_r^{(a)}(x,y)\) and integrate with respect to \(d\mu(y)\). The left-hand side becomes
\[
\mK_r^{(a)}(T_k)(x)-M_r^{(a)}(x)\,T_k(x).
\]
We use
\[
\int_{-1}^1 T_m(y)\,S_r^{(a)}(x,y)\,d\mu(y)
=
\begin{cases}
(1-a_m)T_m(x), & 1\le m\le r,\\[1mm]
1, & m=0,\\[1mm]
0, & m>r.
\end{cases}
\]
For the first term on the right-hand side, we obtain
\[
a_k\Bigl((1-a_k)T_k(x)-T_k(x)\Bigr)
=
-a_k^2\,T_k(x).
\]
For the second term, each summand gives
\[
(a_{j+k}-a_j)\Bigl((1-a_{j+k})-(1-a_j)\Bigr)T_j(x)T_{j+k}(x)
=
-(a_{j+k}-a_j)^2\,T_j(x)T_{j+k}(x).
\]
For the third term, each summand gives
\[
\frac12(a_{k-j}-a_j)\Bigl((1-a_{k-j})-(1-a_j)\Bigr)T_j(x)T_{k-j}(x)
=
-\frac12(a_{k-j}-a_j)^2\,T_j(x)T_{k-j}(x).
\]

This proves
\begin{align*}
&\mK_r^{(a)}(T_k)(x)-M_r^{(a)}(x)\,T_k(x) \\
& \quad =
-a_k^2\,T_k(x)
-\sum_{j=1}^r (a_{j+k}-a_j)^2\,T_j(x)T_{j+k}(x)
-\frac12\sum_{j=1}^{k-1}(a_{k-j}-a_j)^2\,T_j(x)T_{k-j}(x).
\end{align*}
Finally, apply the product formulas
\[
2T_jT_{j+k}=T_{2j+k}+T_k,
\qquad
2T_jT_{k-j}=T_{|2j-k|}+T_k,
\]
to obtain the stated expansion.
\end{proof}

\subsection{A sum-of-squares approximation}
\label{sec:A sum-of-squares approximation}
We now have a generic construction of a positive linear operator $\mK_r^{(a)}$ that approximately maps $T_k$ to $M_r^{(a)}\,T_k$. It thus remains to divide by $M_r^{(a)}$ in a suitable way. For this we use the lemmata from \cref{sec:divide}.

Recall that, for $a \in [0,1]^r$ we let
\begin{align*}
S_r^{(a)}(x,y) &:=1+2\sum_{j=1}^r (1-a_j)\,T_j(x)T_j(y), \\
M_r^{(a)}(x)&:=\mK_r^{(a)}(1)(x)
=
1+2\sum_{j=1}^r (1-a_j)^2\,T_j(x)^2,
\end{align*}
and we extend the sequence \((a_j)\) by \(a_m:=1\) for all \(m>r\).
Then, for $N >1$ even let
\[
p_{r,N}^{(a)} := \frac1{1+2\sum_{j=1}^r (1-a_j)^2}\sum_{m=0}^{N}
\left(1-\frac{M_r^{(a)}}{1+2\sum_{j=1}^r (1-a_j)^2}
\right)^m.
\]

We now define the kernel
\begin{equation}\label{def:kerneltilde}
  \widetilde K_{r,N}^{(a)}(x,y) :=  p_{r,N}^{(a)}(x)S_r^{(a)}(x,y)^2 \quad\quad (x,y \in [-1,1]),
\end{equation}
and its associated positive linear approximation operator
  \[
\widetilde {\mathcal K}_{r,N}^{(a)}(f)(x) := \int_{[-1,1]} f(y)\widetilde K_r^{(a)}(x,y)d\mu(y).
  \]

First note that $\widetilde {\mathcal K}_{r}^{(a)}$ maps nonnegative polynomials to SOS-polynomials of degree $2r(N+1)$. To see this, one can use a quadrature rule for the Chebyshev-measure $\mu$ and the fact that $p_{r,N}^{(a)}$ is an SOS-polynomial of degree $2rN$. We now bound the error.

\begin{lemma}
\label{lemma:tildeK_approx}
Let $C:= 1+2\sum_{j=1}^r (1-a_j)^2$ and assume $c$ is a lower bound on $M_r^{(a)}(x)$ for $x \in [-1,1]$.
Then,
\begin{equation}
\|\widetilde {\mathcal K}_{r,N}^{(a)}(T_k) - T_k\|_{\cheb} \le \frac{N+1}{C}\|{\mathcal K}_{r,N}^{(a)}(T_k) -M_r^{(a)}\,T_k \|_{\cheb} + \frac{C\sqrt{2((N+2)r+1)}}{c}\left(1-\frac{c}{C}\right)^{N+1}.
\label{tildeK_approximation}
\end{equation}
\end{lemma}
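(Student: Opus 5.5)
We will write $\widetilde{\mathcal K}_{r,N}^{(a)}(T_k) = p_{r,N}^{(a)}\,\mK_r^{(a)}(T_k)$. This holds because the factor $p_{r,N}^{(a)}(x)$ in \eqref{def:kerneltilde} does not depend on $y$. (We read the ${\mathcal K}_{r,N}^{(a)}$ in the statement as $\mK_r^{(a)}$.) Set $E := \mK_r^{(a)}(T_k) - M_r^{(a)} T_k$. Then
\[
\widetilde{\mathcal K}_{r,N}^{(a)}(T_k) - T_k = p_{r,N}^{(a)}\,E + \bigl(p_{r,N}^{(a)} M_r^{(a)} - 1\bigr) T_k ,
\]
and we bound the two terms separately in $\|\cdot\|_\cheb$ using the triangle inequality.

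For the first term we use sub-multiplicativity of $\|\cdot\|_\cheb$, which holds because $2T_iT_j = T_{i+j}+T_{|i-j|}$. Lemma \ref{lem:1qbound} applied to $q = M_r^{(a)}$ gives $\|1 - M_r^{(a)}/C\|_\cheb \le 1$. Then \eqref{eq:geocheb} gives $\|p_{r,N}^{(a)}\|_\cheb \le (N+1)/C$. Hence $\|p_{r,N}^{(a)}E\|_\cheb \le \frac{N+1}{C}\|E\|_\cheb$, which is the first summand of \eqref{tildeK_approximation}.

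For the second term, sub-multiplicativity and $\|T_k\|_\cheb = 1$ reduce the problem to bounding $\|p_{r,N}^{(a)} M_r^{(a)} - 1\|_\cheb$. Here we pass to the sup-norm via \eqref{norm equivalence relations} with $n=1$. The polynomial $g := p_{r,N}^{(a)}M_r^{(a)} - 1$ has degree at most $2rN + 2r = 2r(N+1)$. This gives $\|g\|_\cheb \le \sqrt{2(2r(N+1)+1)}\,\|g\|_\infty$, which is slightly worse than the stated $\sqrt{2((N+2)r+1)}$. To get the stated constant we will instead use the exact geometric-series identity. With $t = 1 - M_r^{(a)}/C$ we have
\[
p_{r,N}^{(a)} M_r^{(a)} - 1 = -t^{N+1}.
\]
We bound $\|t^{N+1}\|_\cheb$ by writing $t^{N+1} = t^{\lceil\cdot\rceil}\cdot t^{\lfloor\cdot\rfloor}$ or similar, or simply we accept the degree bound from \eqref{norm equivalence relations}, possibly sharpened by expressing $T_j^2 = (1+T_{2j})/2$. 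The sup-norm part is straightforward: on $[-1,1]$ we have $0\le t \le 1 - c/C$, so $\|g\|_\infty \le (1-c/C)^{N+1}$. The extra factor $C/c$ in the statement is then harmless slack, since $C/c \ge 1$. Alternatively, we could bound $\|p_{r,N}^{(a)} - 1/M_r^{(a)}\|_\infty$ by Lemma \ref{lem:ratio} and multiply by $\|M_r^{(a)}\|_\infty \le C$; this produces exactly the factor $\frac{C}{c}(1-c/C)^{N+1}$ and suggests the intended route.

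The main obstacle is matching the dimension factor $\sqrt{2((N+2)r+1)}$ exactly, which requires the right degree count for the polynomial to which the norm equivalence is applied. We will try first the version $p_{r,N}^{(a)}M_r^{(a)}T_k - T_k$, which has degree at most $2rN + 2r + k \le (2N+3)r$. If that count does not give the stated constant, we will absorb the discrepancy using the slack in $C/c$. Everything else follows directly from Lemmas \ref{lem:ratio} and \ref{lem:1qbound}.
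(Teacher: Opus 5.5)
Your decomposition $\widetilde{\mathcal K}_{r,N}^{(a)}(T_k)-T_k=p\,E+(pM-1)T_k$ (with $p=p_{r,N}^{(a)}$, $M=M_r^{(a)}$) is exactly the paper's, and so is your treatment of the first term. Your sup-norm bound $\|pM-1\|_\infty=\|t^{N+1}\|_\infty\le(1-c/C)^{N+1}$ is also correct, and sharper than the paper's, which carries an extra factor $C/c$.

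\textbf{The gap} is the conversion to $\|\cdot\|_\cheb$: you never establish the factor $\sqrt{2((N+2)r+1)}$. Your count $\deg(pM)=2r(N+1)$ is the right one, but via \eqref{norm equivalence relations} it only gives $\sqrt{2(2r(N+1)+1)}$.

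Your fallback of absorbing the difference into $C/c$ is not valid for the lemma as stated. It would need $C/c\ge\sqrt{(2r(N+1)+1)/((N+2)r+1)}$, and the right-hand side tends to $\sqrt2$ as $N$ grows. But $C/c$ can be arbitrarily close to $1$: take all $a_j$ close to $1$ and $c=1$, which is a valid lower bound since $M\ge 1$. The absorption does work in Theorem~\ref{theo:main}, where $c=C/2$, but not in general. Your other options (splitting $t^{N+1}$, or passing to $pMT_k-T_k$ of degree up to $(2N+3)r$) only make the count worse.

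The paper's own proof skips this same point by asserting that $pM$ has degree $(N+2)r$. That is inconsistent with $\deg p=2rN$, so you did spot a real issue, but the proposal does not close it.

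\textbf{The missing step} is the one you only gestured at with $T_j^2=(1+T_{2j})/2$. $M$, and hence $t=1-M/C$ and $t^{N+1}$, involve only even-index Chebyshev polynomials, because products of $T_{2i}$'s stay even-indexed. So $t^{N+1}=\sum_{k=0}^{r(N+1)}g_kT_{2k}$ has at most $r(N+1)+1$ nonzero coefficients. The argument behind \eqref{norm equivalence relations} depends only on this number, not on the degree. By Cauchy--Schwarz and orthogonality ($\int (t^{N+1})^2d\mu\ge\tfrac12\sum_k g_k^2$),
\[
\|t^{N+1}\|_\cheb\le\sqrt{r(N+1)+1}\,\Bigl(\sum_k g_k^2\Bigr)^{1/2}\le\sqrt{2(r(N+1)+1)}\,\|t^{N+1}\|_\infty .
\]
Combine this with your sup-norm bound, $(N+1)r\le(N+2)r$, and $C/c\ge 1$ to get the stated second term. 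With this step inserted, your argument is a complete proof.
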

\begin{proof}
By construction, for any polynomial $f$ we have
\[
\widetilde {\mathcal K}_{r,N}^{(a)}(f) = p_{r,N}^{(a)} \mK_r^{(a)}(f).
\]

We now estimate the error in the $\|\cdot\|_\cheb$-norm using the triangle inequality and sub-multiplicativity of the $\|\cdot\|_\cheb$-norm:
\begin{align*}
\|\widetilde {\mathcal K}_{r,N}^{(a)}(T_k) - T_k\|_{\cheb} &=\|\widetilde {\mathcal K}_{r,N}^{(a)}(T_k) -p_{r,N}^{(a)}M_r^{(a)}\,T_k + p_{r,N}^{(a)}M_r^{(a)}\,T_k -  T_k\|_{\cheb} \\
&\leq\|\widetilde {\mathcal K}_{r,N}^{(a)}(T_k) -p_{r,N}^{(a)}M_r^{(a)}\,T_k \|_{\cheb} + \|p_{r,N}^{(a)}M_r^{(a)}\,T_k -  T_k\|_{\cheb} \\
&=\|p_{r,N}^{(a)} \left( {\mathcal K}_{r,N}^{(a)}(T_k) -M_r^{(a)}\,T_k\right) \|_{\cheb} + \|\left(p_{r,N}^{(a)}M_r^{(a)} - 1\right)T_k\|_{\cheb} \\
&\leq \|p_{r,N}^{(a)}\|_{\cheb} \|{\mathcal K}_{r,N}^{(a)}(T_k) -M_r^{(a)}\,T_k \|_{\cheb} + \|p_{r,N}^{(a)}M_r^{(a)} - 1\|_{\cheb}.
\end{align*}
It thus remains to bound $\|p_{r,N}^{(a)}\|_\cheb$ and $\|p_{r,N}^{(a)}M_r^{(a)} - 1\|_{\cheb}$.

To upper bound $\|p_{r,N}^{(a)}\|_\cheb$ we use \cref{lem:ratio,lem:1qbound} with $q:=M_r^{(a)}$ and $C:= 1+2\sum_{j=1}^r (1-a_j)^2$ to obtain
\[
\|p_{r,N}^{(a)}\|_\cheb \leq \frac{N+1}{1+2\sum_{j=1}^r (1-a_j)^2}.
\]
To upper bound $\|p_{r,N}^{(a)}M_r^{(a)} - 1\|_{\cheb}$ we use \cref{lem:ratio} again with the same choice of $q$ and $C$ to first obtain
\begin{equation} \label{eq:sup}
\left\|\frac{1}{M_r^{(a)}} -p_{r,N}^{(a)}\right\|_{\infty} \leq \frac{1}{c}\left(1-\frac{c}{C}\right)^{N+1},
\end{equation}
where $c$ is a lower bound on $M_r^{(a)}(x)$ for $x \in [-1,1]$.   Using sub-multiplicativity of the sup-norm we obtain from \eqref{eq:sup} that
\[
\left\|1 -p_{r,N}^{(a)}M_r^{(a)}\right\|_{\infty} \leq \frac{C}{c}\left(1-\frac{c}{C}\right)^{N+1}.
\]
We now observe that $p_{r,N}^{(a)}M_r^{(a)}$ is a degree-$(N+2)r$ polynomial. For degree-$d$ polynomials, it is known that $\|p\|_\cheb \leq \sqrt{2(d+1)} \|p\|_\infty$, see \eqref{norm equivalence relations}. Hence,
\[
\left\|1 -p_{r,N}^{(a)}M_r^{(a)}\right\|_{\cheb} \leq \frac{C\sqrt{2((N+2)r+1)}}{c}\left(1-\frac{c}{C}\right)^{N+1},
\]
completing the proof.
\end{proof}

\subsection{Choosing the weights \texorpdfstring{$a_j$}{aj}} \label{sec:phi}
Let \(\phi:\mathbb{R}\to\mathbb{R}\) be continuous, with \(
\phi(x)=0\) for \(x\le 0\),
\(\phi(x)=1\) for \(x\ge 1\),
and such that \(\phi\in C^1([0,1])\).
For \(j,r > 0\), we now choose the kernel weights via
\(
a_j:=\phi\!\left(\frac{j}{r}\right)\).

\begin{lemma}\label{lem:f-bound}
For all $j,k \ge 0$,
\[ (a_{j+k}-a_j)^2 \le \frac {k^2}{r^2}\|\phi'\|_{\infty}^2, \]
in particular,
\[
\sum_{j=1}^{r}(a_{j+k}-a_j)^2
\le \frac{k^{2}}{r}\,\|\phi'\|_{\infty}^2.
\]
\end{lemma}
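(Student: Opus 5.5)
The first inequality follows from the mean value theorem applied to $\phi$. Since $a_m = \phi(m/r)$ for every integer $m \ge 0$, we have $a_0=\phi(0)=0$ and $a_m=\phi(m/r)=1$ for $m\ge r$. So the convention $a_m=1$ for $m>r$ and $a_0=0$ is consistent with the formula $a_m=\phi(m/r)$ for all $m\ge 0$. Then
\[
a_{j+k}-a_j=\phi\!\left(\tfrac{j+k}{r}\right)-\phi\!\left(\tfrac{j}{r}\right).
\]
Here $\|\phi'\|_\infty$ is the supremum of $|\phi'|$ over $[0,1]$. Outside $[0,1]$, $\phi$ is constant, and $\phi$ is continuous on $\mathbb{R}$. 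Hence $\phi$ is Lipschitz on all of $\mathbb{R}$ with constant $\|\phi'\|_\infty$.

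To justify this, take $s\le t$ and clip both to $[0,1]$, obtaining $s'=\min(\max(s,0),1)$ and $t'=\min(\max(t,0),1)$. Then $\phi(s)=\phi(s')$ and $\phi(t)=\phi(t')$, and $|t'-s'|\le |t-s|$. On $[0,1]$ we apply the mean value theorem, using $\phi\in C^1([0,1])$ with one-sided derivatives at the endpoints; alternatively, we write $\phi(t')-\phi(s')=\int_{s'}^{t'}\phi'$. This gives
\[
|\phi(t)-\phi(s)|\le \|\phi'\|_\infty\,|t-s|.
\]
With $t=(j+k)/r$ and $s=j/r$ we get $|a_{j+k}-a_j|\le \frac{k}{r}\|\phi'\|_\infty$, and squaring yields the first claim.

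For the summed bound there are $r$ terms, $j=1,\dots,r$, each at most $\frac{k^2}{r^2}\|\phi'\|_\infty^2$. Summing gives at most $r\cdot \frac{k^2}{r^2}\|\phi'\|_\infty^2=\frac{k^2}{r}\|\phi'\|_\infty^2$.

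The only delicate point is the Lipschitz argument across the breakpoints $0$ and $1$, where $\phi$ need not be differentiable. The clipping (or integral) argument above handles this, and no further obstacle is expected.
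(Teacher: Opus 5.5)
Your proof is correct and follows essentially the paper's argument: the paper writes $a_{j+k}-a_j=\int_{j/r}^{(j+k)/r}\phi'(t)\,dt$, bounds this by $\frac{k}{r}\|\phi'\|_\infty$, and sums the $r$ terms, exactly as you do. Your clipping argument makes explicit two points the paper leaves implicit: the convention $a_0=0$, $a_m=1$ for $m>r$ agrees with $a_m=\phi(m/r)$, and $\phi$ is constant outside $[0,1]$, so only $\sup_{[0,1]}|\phi'|$ matters.
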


\begin{proof}
For each \(j\),
\[
a_{j+k}-a_j
=
\int_{j/r}^{(j+k)/r} \phi'(t)\,dt.
\]
Hence, 
\[
|a_{j+k}-a_j|
\le
\int_{j/r}^{(j+k)/r} |\phi'(t)| \,dt \leq \frac{k}{r} \|\phi'\|_\infty.
\]
Summing over \(j=1,\dots,r\), we obtain
\begin{align*}
\sum_{j=1}^{r}(a_{j+k}-a_j)^2
&\le \frac {k^2}{r}\|\phi'\|_{\infty}^2.
\end{align*}
\end{proof}

We may now formulate the bounds from the previous subsection in terms of $\phi$.
\begin{proposition}\label{prop:boundCoeffsSqr-f}
Let  \(0\le k\le r\), then,
\[\| \mK_r^{(a)}(T_k)-M_r^{(a)}\,T_k \|_\cheb \le \phi\left( \tfrac kr \right)^2 + \frac {3 k^2}{2r}\|\phi'\|_{\infty}^2.
\]
\end{proposition}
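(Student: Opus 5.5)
Starting from the exact expansion in \cref{prop:boundCoeffsSqr}, we take the $\|\cdot\|_\cheb$-norm of both sides and apply the triangle inequality term by term. Since every $|T_m|$ contributes coefficient $1$ in the Chebyshev $\ell_1$-norm, we obtain
\[
\| \mK_r^{(a)}(T_k)-M_r^{(a)}T_k \|_\cheb \le |A_{k,r}^{(a)}| + \frac12\sum_{j=1}^r (a_{j+k}-a_j)^2 + \frac14\sum_{j=1}^{k-1}(a_{k-j}-a_j)^2 .
\]
Here we do not need to worry about coincidences among the indices $2j+k$, $|2j-k|$ and $k$: merging terms only decreases the $\ell_1$-norm, so the triangle inequality already covers them. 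Since every term of $A_{k,r}^{(a)}$ is nonpositive, $|A_{k,r}^{(a)}|$ equals $a_k^2$ plus the same two sums. The right-hand side therefore becomes
\[
a_k^2+\sum_{j=1}^r (a_{j+k}-a_j)^2+\frac12\sum_{j=1}^{k-1}(a_{k-j}-a_j)^2 .
\]

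Next we bound each piece in terms of $\phi$. First, $a_k^2=\phi(k/r)^2$; this also holds for $k=0$, where $a_0=0=\phi(0)$, and for $k=r$, where $a_r=\phi(1)=1$. For the second sum, \cref{lem:f-bound} gives $\sum_{j=1}^r (a_{j+k}-a_j)^2\le \frac{k^2}{r}\|\phi'\|_\infty^2$. One point to check is that the lemma applies even when $j+k>r$, where we use $a_m=1=\phi(m/r)$. This is consistent because $\phi\equiv1$ on $[1,\infty)$, so the integral representation $a_{j+k}-a_j=\int_{j/r}^{(j+k)/r}\phi'$ remains valid with $\phi'=0$ beyond $1$.

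For the third sum, we apply the first statement of \cref{lem:f-bound} to the pair $(j,k-j)$ to get
\[
|a_{k-j}-a_j|\le \frac{|k-2j|}{r}\|\phi'\|_\infty\le \frac{k}{r}\|\phi'\|_\infty .
\]
There are $k-1\le r$ summands, so the third sum is at most $(k-1)\frac{k^2}{r^2}\|\phi'\|_\infty^2\le \frac{k^2}{r}\|\phi'\|_\infty^2$. Multiplying by $\frac12$ gives $\frac{k^2}{2r}\|\phi'\|_\infty^2$. Adding this to the second-sum bound yields $\frac{3k^2}{2r}\|\phi'\|_\infty^2$, which together with $\phi(k/r)^2$ is exactly the claimed bound.

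There is no real obstacle here. The only steps needing care are the sign bookkeeping, which shows that $|A_{k,r}^{(a)}|$ doubles the two sums rather than adding to them in some other way, and the justification that \cref{lem:f-bound} applies across the extension $a_m=1$ for $m>r$. The third sum is bounded crudely, but that is enough to reach the constant $3/2$.
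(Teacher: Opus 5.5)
Your proposal is correct and follows the same route as the paper: apply the triangle inequality to the expansion in \cref{prop:boundCoeffsSqr} to get $a_k^2+\sum_{j=1}^r(a_{j+k}-a_j)^2+\tfrac12\sum_{j=1}^{k-1}(a_{k-j}-a_j)^2$, then bound each piece with \cref{lem:f-bound}. You make explicit what the paper leaves implicit, namely the estimate $|a_{k-j}-a_j|\le \frac{|k-2j|}{r}\|\phi'\|_\infty$ together with $k-1\le r$ for the third sum, and the validity of the lemma across the extension $a_m=1$; you also cover $k=0$ through the general formula instead of by the paper's separate remark.
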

\begin{proof}
For $k=0$ we observe that $M_r^{(a)}(x):=\mK_r^{(a)}(1)(x)$. Now let $1 \leq k \leq r$.
By \cref{prop:boundCoeffsSqr}
and \cref{lem:f-bound}
\begin{align*}
 \| \mK_r^{(a)}(T_k)-M_r^{(a)}\,T_k \|_\cheb & \le
a_k^2
+\sum_{j=1}^r (a_{j+k}-a_j)^2
+\frac12\sum_{j=1}^{k-1}(a_{k-j}-a_j)^2\\
& \le \phi\left( \tfrac kr \right)^2 + \frac {3 k^2}{2r}\|\phi'\|_{\infty}^2.
\end{align*}
\end{proof}

For nonnegative kernels, we may lower bound the function $M_r^{(a)}$ in terms of $\phi$ as follows.

\begin{proposition}\label{prop:M-lower-bound}
Suppose $a \in [0,1]^r$ is such that 
\[
S_r^{(a)}(x,y)\ge 0 \text{ for all }(x,y)\in[-1,1]^2.
\]
Then, for every \(x\in[-1,1]\),
\[
M_r^{(a)}(x) \ge \frac12+\sum_{j=1}^r (1-a_j)^2.
\]
In particular,
\[
M_r^{(a)}(x)\ge \tfrac 12 - 2\|\phi'\|_\infty +  \tfrac 1{3\|\phi'\|_\infty} r.
\]
\end{proposition}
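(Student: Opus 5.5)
The plan is to compute $M_r^{(a)}$ in the Chebyshev basis and then recognize the oscillating part as an average of the kernel $S_r^{(a)}$ at a suitable point. Using $T_j(x)^2 = \tfrac12(1+T_{2j}(x))$, I rewrite
\[
M_r^{(a)}(x) = 1+\sum_{j=1}^r (1-a_j)^2 + \sum_{j=1}^r (1-a_j)^2\,T_{2j}(x).
\]
The first claim is then equivalent to
\[
\sum_{j=1}^r (1-a_j)^2 T_{2j}(x)\ge -\tfrac12 \quad\text{for } x\in[-1,1].
\]

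To show this I use the nonnegativity of $S_r^{(a)}$ through a two-point integral. Fix $x=\cos\theta$ and put $x'':=T_2(x)=\cos 2\theta\in[-1,1]$. By orthogonality of the $T_j$ with respect to $\mu$,
\[
\int_{-1}^1 S_r^{(a)}(x'',y)\,S_r^{(a)}(1,y)\,d\mu(y) = 1+2\sum_{j=1}^r (1-a_j)^2\,T_j(x'')\,T_j(1).
\]
The integrand is a product of two nonnegative functions, so this quantity is nonnegative. Now use $T_j(1)=1$ and the composition rule $T_j(T_2(x))=T_{2j}(x)$. This gives
\[
1+2\sum_{j=1}^r (1-a_j)^2 T_{2j}(x)\ge 0,
\]
which is exactly the required inequality. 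The main obstacle was finding this trick, namely evaluating the kernel at $T_2(x)$ and at $1$. A direct Cauchy--Schwarz bound would only give $M_r^{(a)}\ge 1$, which is too weak.

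For the ``in particular'' part, let $a_j=\phi(j/r)$ and $L:=\|\phi'\|_\infty$. Since $\phi(0)=0$ and $\phi(1)=1$, we have $L\ge 1$. We also have $\phi(t)\le Lt$, so $1-a_j\ge \max(1-Lj/r,0)$, and hence
\[
\sum_{j=1}^r (1-a_j)^2 \ge \sum_{j\ge 1} g(j/r), \qquad g(t):=\max(1-Lt,0)^2 .
\]
Here $g$ vanishes for $t\ge 1/L$, and $1/L\le 1$, so only indices $j\le r$ contribute. Since $g$ is nonincreasing and bounded by $1$, a Riemann-sum comparison gives
\[
\sum_{j\ge1} g(j/r)\ge r\int_{1/r}^{\infty} g \ge r\int_0^{1/L} (1-Lt)^2\,dt - 1 = \frac{r}{3L}-1 .
\]
Combining with the first part yields
\[
M_r^{(a)}(x)\ge \tfrac12 + \frac{r}{3L} - 1 = -\tfrac12+\frac{r}{3L} \ge \tfrac12 - 2L + \frac{r}{3L},
\]
where the last step uses $L\ge1$.
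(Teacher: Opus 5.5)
Your proof is correct. The first part is the paper's own argument. The paper introduces the composed kernel $R_r(x,y)=\int_{-1}^1 S_r^{(a)}(x,t)S_r^{(a)}(t,y)\,d\mu(t)=1+2\sum_{j=1}^r(1-a_j)^2T_j(x)T_j(y)\ge 0$ and writes $M_r^{(a)}(x)=\tfrac12\bigl(R_r(1,1)+R_r(T_2(x),1)\bigr)\ge \tfrac12 R_r(1,1)$, which is exactly your evaluation at the pair $(T_2(x),1)$.

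For the ``in particular'' part your route differs. The paper applies the mean value theorem to $g=(1-\phi)^2$ and compares $\sum_{j=1}^r g(j/r)$ with $r\int_0^1 g$, at a cost of $2\|\phi'\|_\infty$. Only afterwards does it use $\phi(t)\le Lt$ to get $\int_0^1 g\ge 1/(3L)$. You instead apply $\phi(t)\le Lt$ together with $a_j\le 1$ pointwise at the grid points. This bounds each summand from below by the nonincreasing function $\max(1-Lt,0)^2\le 1$, and for that function the one-sided Riemann comparison costs only $1$.

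Your route buys two things. First, a slightly better constant: you get $-\tfrac12+\tfrac{r}{3L}$ instead of $\tfrac12-2L+\tfrac{r}{3L}$, which is weaker since $L\ge 1$. Second, you only use the grid values $a_j=\phi(j/r)\in[0,1]$. The paper's estimate $\|g'\|_\infty\le 2\|\phi'\|_\infty$ tacitly uses $|1-\phi(t)|\le 1$ for all $t\in[0,1]$, which is not among its stated hypotheses on $\phi$.

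The paper's route, in turn, yields the intermediate bound $M_r^{(a)}(x)\ge \tfrac12+r\int_0^1(1-\phi(t))^2\,dt-2\|\phi'\|_\infty$. This keeps the exact integral, and it can be much stronger than $r/(3L)$ for strongly nonlinear $\phi$, for instance one that stays near $0$ on most of $[0,1]$ and then rises steeply. The paper does not exploit this, since it immediately weakens the integral to $1/(3L)$.
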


\begin{proof}
Consider the kernel composition
\[
R_r(x,y) := \int_{-1}^1 S_r^{(a)}(x,t)S_r^{(a)}(t,y)\,d\mu(t).
\]
Since \(S_r^{(a)}\) is nonnegative on \([-1,1]^2\), it follows that
\[
R_r(x,y)\ge 0 \qquad\text{for all }(x,y)\in[-1,1]^2.
\]
Using the orthogonality relations, we compute
\[
\begin{aligned}
R_r(x,y)
&= \int_{-1}^1 \left( 1+2\sum_{j=1}^r (1-a_j)T_j(x)T_j(t) \right)  \left( 1+2\sum_{k=1}^r (1-a_k)T_k(t)T_k(y) \right) d\mu(t) \\
&= 1 + 4\sum_{j,k=1}^r (1-a_j)(1-a_k)T_j(x)T_k(y) \int_{-1}^1T_j(t)T_k(t)\,d\mu(t) \\
&= 1+ 2\sum_{j=1}^r (1-a_j)^2T_j(x)T_j(y). 
\end{aligned}
\]
Now we use $2T_j(x)^2 = 1 + T_{2j}(x)  = 1 + T_j(T_2(x)) $ to write,
\[
\begin{aligned}
M_r^{(a)}(x)
&= 1+2\sum_{j=1}^r (1-a_j)^2T_j(x)^2 
= 1+ \sum_{j=1}^r (1-a_j)^2 (1+T_{j}(T_2(x))) \\
&= \frac12 \left( R_r(1,1) + R_r(T_2(x),1) \right)
\ge \frac12 R_r(1,1), 
\end{aligned}
\]
proving the first statement.
Now we prove the second statement, since \(g(t) = (1-\phi(t))^2\) in $C^1([0,1])$, by the mean value theorem we have
\[
|g(x) -  g(j/r)| \le  \|g'\|_\infty\,|\tfrac jr-x| \le \tfrac 2r\|\phi'\|_\infty \text{ for }  \tfrac {j-1}r \le x \le \tfrac{j}r.
\]
In particular,
\[
\frac{1}{r} g(j/r) \geq \int_{(j-1)/r}^{j/r} g(x) \, dx - \frac{2}{r^2}\|\phi'\|_\infty.
\]
Substituting this into the preceding bound yields
\[
M_r^{(a)}(x)
\ge \frac12+ \sum_{j=1}^r (1-a_j)^2
\ge \frac12+  r\int_0^1 (1-\phi(t))^2\,dt - 2\|\phi'\|_\infty.
\]
To finish, let $M  = \|\phi'\|_{\infty}$. For all \(t\in[0,1]\), we have \(\phi(t) \le Mt \).
Hence
\[
\int_0^1 (1-\phi(t))^2\,dt 
\ge \int_0^{1/M} (1-Mt)^2\,dt
= \frac1M\int_0^1 (1-u)^2\,du
=  \frac{1}{3M}.
\]
\end{proof}

We are now able to simplify the error bound in Lemma \ref{lemma:tildeK_approx} by formulating it in terms of $\phi$.
\begin{theorem} \label{theo:main}
    Assume  $S_r^{(a)}$ is nonnegative on $[-1,1]^2$, and consider the associated kernel $\widetilde {\mathcal K}_{r,N}^{(a)}$. %Let $c:= \frac12+\sum_{j=1}^r (1-a_j)^2$ and $C:=1+2\sum_{j=1}^r(1-a_j)^2$.
    Then
    \[
    \|\widetilde {\mathcal K}_{r,N}^{(a)}(T_k) - T_k\|_{\cheb} \leq \frac{N+1}{1 - 4\|\phi'\|_\infty +  \tfrac 2{3\|\phi'\|_\infty} r}\left(\phi\left( \tfrac kr \right)^2 + \frac {3 k^2}{2r}\|\phi'\|_{\infty}^2\right) + \sqrt{2((N+2)r+1)}\left(\frac{1}{2}\right)^{N}
    \]
    for every $0 \leq k \leq r$, provided $r > 6\|\phi'\|_{\infty}^2 - \frac{3}{2}\|\phi'\|_{\infty}$.
\end{theorem}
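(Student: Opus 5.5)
The plan is to start from Lemma \ref{lemma:tildeK_approx} and replace each quantity in it by an explicit bound in terms of $\phi$. Set $C := 1+2\sum_{j=1}^r(1-a_j)^2$ and $c := \frac12 + \sum_{j=1}^r (1-a_j)^2$. Proposition \ref{prop:M-lower-bound} applies because $S_r^{(a)}\ge 0$ on $[-1,1]^2$, and it shows that this $c$ is a valid lower bound on $M_r^{(a)}$ over $[-1,1]$. Since $C = 2c$, we get $C/c = 2$ and $1 - c/C = \tfrac12$. The second term of \eqref{tildeK_approximation} then becomes $2\sqrt{2((N+2)r+1)}\,(\tfrac12)^{N+1} = \sqrt{2((N+2)r+1)}\,(\tfrac12)^{N}$, which is exactly the second term of the claimed bound.

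For the first term, Proposition \ref{prop:boundCoeffsSqr-f} bounds the factor $\|\mK_r^{(a)}(T_k)-M_r^{(a)}T_k\|_\cheb$ by $\phi(k/r)^2 + \frac{3k^2}{2r}\|\phi'\|_\infty^2$ for $0\le k\le r$. What remains is an upper bound on $\frac{N+1}{C}$, i.e.\ a lower bound on $C$. I would use the integral estimate from the proof of Proposition \ref{prop:M-lower-bound}, namely $\sum_{j=1}^r(1-a_j)^2 \ge \frac{r}{3\|\phi'\|_\infty} - 2\|\phi'\|_\infty$. This gives
\[
C = 1 + 2\sum_{j=1}^r (1-a_j)^2 \ge 1 - 4\|\phi'\|_\infty + \tfrac{2}{3\|\phi'\|_\infty}\, r .
\]

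The one step that needs care is the division: replacing $1/C$ by the reciprocal of this lower bound requires the lower bound to be strictly positive. Multiplying $1 - 4M + \frac{2r}{3M} > 0$ by $\frac{3M}{2}$, where $M = \|\phi'\|_\infty > 0$ (note $M \ge 1$ because $\phi$ rises from $0$ to $1$ on $[0,1]$), gives the condition $r > 6M^2 - \tfrac32 M$. This is precisely the hypothesis of the theorem, and it justifies the inversion. Combining the two terms gives the stated inequality.

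I expect no real obstacle. The only points to check are that the identity $C = 2c$ makes the geometric factor exactly $\tfrac12$, and that the positivity condition on the denominator matches the hypothesis on $r$.
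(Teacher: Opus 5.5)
Your proposal is correct and is essentially the paper's own proof. You plug $c=\frac12+\sum_{j=1}^r(1-a_j)^2$ (valid by Proposition~\ref{prop:M-lower-bound}) into Lemma~\ref{lemma:tildeK_approx}, use $C=2c$ to collapse the second term to $\sqrt{2((N+2)r+1)}\,(\tfrac12)^N$, and bound $\frac{N+1}{C}$ via $C\ge 1-4\|\phi'\|_\infty+\frac{2r}{3\|\phi'\|_\infty}$, whose positivity is exactly the hypothesis on $r$.

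One caveat you inherit from the paper: in Lemma~\ref{lemma:tildeK_approx} the product $p_{r,N}^{(a)}M_r^{(a)}$ has degree up to $2r(N+1)$, not $(N+2)r$. The stated bound still holds, because $1-p_{r,N}^{(a)}M_r^{(a)}=(1-M_r^{(a)}/C)^{N+1}$ has sup-norm at most $2^{-(N+1)}$ on $[-1,1]$. That extra factor $\tfrac12$ more than compensates for the larger degree in the conversion $\|g\|_\cheb\le\sqrt{2(\deg g+1)}\,\|g\|_\infty$.
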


\begin{proof}
From \cref{prop:boundCoeffsSqr-f} we recall that for $0 \leq k \leq r$ we have
\begin{equation}
\label{bound1}
\| \mK_r^{(a)}(T_k)-M_r^{(a)}\,T_k \|_\cheb \le \phi\left( \tfrac kr \right)^2 + \frac {3 k^2}{2r}\|\phi'\|_{\infty}^2.
\end{equation}
By \cref{prop:M-lower-bound}, $c := \frac12+\sum_{j=1}^r (1-a_j)^2$ is a lower bound on $M_r^{(a)}(x)$ for $x \in [-1,1]$.
By Lemma \ref{lemma:tildeK_approx}, we have
\[
\|\widetilde {\mathcal K}_{r,N}^{(a)}(T_k) - T_k\|_{\cheb} \le \frac{N+1}{C}\|{\mathcal K}_{r,N}^{(a)}(T_k) -M_r^{(a)}\,T_k \|_{\cheb} + \frac{C\sqrt{2((N+2)r+1)}}{c}\left(1-\frac{c}{C}\right)^{N+1}.
\]
After noting that $c = \frac{1}{2}C$, and using \eqref{bound1}, this becomes
\[
\|\widetilde {\mathcal K}_{r,N}^{(a)}(T_k) - T_k\|_{\cheb} \le \frac{N+1}{2c}\left(\phi\left( \tfrac kr \right)^2 + \frac {3 k^2}{2r}\|\phi'\|_{\infty}^2\right) + \sqrt{2((N+2)r+1)}\left(\frac{1}{2}\right)^{N}.
\]
We now use
$c \ge \tfrac 12 - 2\|\phi'\|_\infty +  \tfrac 1{3\|\phi'\|_\infty}$ by Proposition \ref{prop:M-lower-bound} to obtain the required result.
Finally, the lower bound condition on $r$ in the theorem is to ensure that $r$ is sufficiently large to guarantee $\tfrac 12 - 2\|\phi'\|_\infty +  \tfrac 1{3\|\phi'\|_\infty}r > 0$.
\end{proof}

\subsection{The squared Fej\'er kernel}
\label{sec:Fejer}
Now we use Theorem \ref{theo:main} to analyze the square of the Fej\'er kernel, that corresponds to $\phi(x) = x$; see e.g.\ \cite{RevModPhys.78.275}.
We therefore consider the sequence 
\[
\hat a_j:=\frac{j}{r}\qquad (j \le r).
\]
We obtain then that $S_r^{(\hat a)}$ is the  Fej\'er kernel and $K_r^{(\hat a)}$ is its square.
It is well-known that the Fej\'er kernel is nonnegative, i.e.\ $S_r^{(\hat a)}(x,y) \ge 0$ for all $x,y \in [-1,1]$; for a proof, see e.g.\ \cite{RevModPhys.78.275}.
We again extend the sequence using $\hat a_j = 1$ for $j > r$.

It remains to choose a suitable value of $N$ in order to get an $O(\log r/r^2)$ error bound.
\begin{proposition}
\label{tildeK_final_error}
For $r \ge 5$ and  $N = \lceil 3\log_2 r \rceil$, one has
\begin{equation*}
  \|\widetilde {\mathcal K}_{r,N}^{(\hat a)}(T_k) - T_k\|_{\cheb} \le  \frac{45k^2(\log_2 r + 1) +4}{r^2} = O\left(\frac{1+k^2\log_2 r}{r^2} \right).
\end{equation*}
Moreover, for this choice of $N$, $\widetilde{\mathcal K}_{r,N}^{(a)}(T_k)$ is an SOS polynomial of degree $2r(1 +\lceil \log_2 r^3\rceil)$.
\end{proposition}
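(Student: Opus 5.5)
The plan is to specialize Theorem~\ref{theo:main} to $\phi(x)=x$ and then substitute $N=\lceil 3\log_2 r\rceil$. For this $\phi$ we have $\|\phi'\|_\infty = 1$ and $\phi(k/r)^2 = k^2/r^2$. The Fej\'er kernel $S_r^{(\hat a)}$ is nonnegative, so Theorem~\ref{theo:main} applies once $r > 6 - \tfrac32 = 4.5$; this is guaranteed by $r\ge 5$. The first term of the bound then reads
\[
\frac{N+1}{\tfrac23 r - 3}\left(\frac{k^2}{r^2} + \frac{3k^2}{2r}\right).
\]

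For the first term I will not use the crude constant $\tfrac23 r-3$ blindly. The first inequality of Proposition~\ref{prop:M-lower-bound} gives the lower bound directly:
\[
c=\tfrac12+\sum_{j=1}^r (1-j/r)^2 = \tfrac12 + \frac{(r-1)(2r-1)}{6r},
\]
so $2c \ge \tfrac{2r}{3}-\tfrac13$ or similar. Either way, for $r\ge5$ the quantity $2c$ is at least a fixed fraction of $r$. For example, $\tfrac23 r -3 \ge \tfrac{r}{15}$ when $r\ge5$, though the sharper direct sum gives a much better constant. The bracket is at most $\tfrac{k^2}{r}\left(\tfrac1r+\tfrac32\right)\le \tfrac{17}{10}\tfrac{k^2}{r}$. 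The first term is therefore $O\bigl((N+1)k^2/r^2\bigr)$ with an explicit constant.

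Since $N+1\le 3\log_2 r+2$, this is bounded by a constant times $k^2(\log_2 r+1)/r^2$. The constant has to be tracked so that it comes out at most $45$. Using $c \ge \tfrac{r}{3}-\tfrac12+\tfrac1{6r}+\tfrac12 \ge r/3$ (from the exact sum, for $r\ge1$), we get $2c\ge 2r/3$. The first term is then at most $\tfrac{3(N+1)}{2r}\cdot\tfrac{17k^2}{10 r}$. Together with $N+1\le 3(\log_2 r+1)$, this gives a constant around $7.7$, well under $45$.

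For the second term, $N\ge 3\log_2 r$ gives $(1/2)^N\le r^{-3}$. Also $N+2\le 3\log_2 r+3$, so $\sqrt{2((N+2)r+1)}\le \sqrt{2(3r\log_2 r+3r+1)}$. For $r\ge5$ this is at most $4r$, since $\log_2 r\le r$ gives roughly $\sqrt{6r^2+6r+2}\le 4r$. The second term is thus at most $4r\cdot r^{-3}=4/r^2$, which is the additive $4/r^2$ in the statement. I expect the main obstacle to be this elementary bookkeeping to hit the stated constants $45$ and $4$ exactly. That includes checking the square-root bound for small $r$ such as $r=5$ and handling the ceiling in $N$. If it is tight, I would fall back on the cruder $\tfrac23 r-3\ge r/15$ together with the generous $45$.

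For the degree and SOS claim, by construction $\widetilde{\mathcal K}_{r,N}^{(\hat a)}(T_k)=p_{r,N}^{(\hat a)}\,\mK_r^{(\hat a)}(T_k)$. Here $p_{r,N}^{(\hat a)}$ is SOS of degree $2rN$, by Lemma~\ref{lem:ratio} with $N$ even. The problem is that $\lceil 3\log_2 r\rceil$ need not be even. I would either note that the statement implicitly takes $N$ even (rounding up, which only strengthens the bound), or use the SOS decomposition of odd geometric sums after checking nonnegativity of the base. The factor $\mK_r^{(\hat a)}(T_k)$ is problematic too: $T_k$ is not nonnegative, so $\mK_r^{(\hat a)}(T_k)$ is not itself SOS. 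I would therefore read the claim as concerning the operator $\widetilde{\mathcal K}_{r,N}^{(\hat a)}$ applied to nonnegative inputs. For a nonnegative $f$ of degree at most $2r$ (degree $r$ suffices here), a Gauss--Chebyshev quadrature with enough nodes writes $\int f(y)S_r(x,y)^2d\mu(y)=\sum_i w_i f(y_i)S_r(x,y_i)^2$, a sum of squares of degree $2r$ in $x$. Multiplying by $p_{r,N}^{(\hat a)}$ gives total degree $2r+2rN=2r(1+N)=2r(1+\lceil\log_2 r^3\rceil)$, matching the stated degree.
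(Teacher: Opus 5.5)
Your argument is correct and follows the paper's skeleton: specialize Theorem~\ref{theo:main} to $\phi(x)=x$, then bound its two terms for $N=\lceil 3\log_2 r\rceil$. Your one deviation, however, is not cosmetic.

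The paper uses the generic bound $C=2c\ge\tfrac23 r-3$. It also carries a spurious factor $2$, writing $\tfrac{2(N+1)}{\frac23 r-3}$. It then asserts that the first term is at most $45k^2(\log_2 r+1)/r^2$ for all $r\ge5$. That assertion is false for small $r$:
\begin{itemize}
\item At $r=5$ one has $N=7$ and $\tfrac23 r-3=\tfrac13$. The paper's expression equals $48\cdot 0.34\,k^2=16.32\,k^2$, while $45(\log_2 5+1)/25\approx 5.98$.
\item At $r=6$ it also fails: $5k^2$ versus about $4.48k^2$.
\item Even without the extra factor $2$, the coefficient $(N+1)/(\tfrac23 r-3)$ of Theorem~\ref{theo:main} gives $8.16\,k^2$ at $r=5$.
\end{itemize}
You instead use the exact value $c=\tfrac12+\sum_{j=1}^r(1-j/r)^2=\tfrac r3+\tfrac1{6r}$, from the first inequality of Proposition~\ref{prop:M-lower-bound}. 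This gives $\tfrac{51(N+1)k^2}{20r^2}\le 7.65\,k^2(\log_2 r+1)/r^2$, and it is what actually makes the constant $45$ valid on the whole range $r\ge5$.

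For the same reason, your announced fallback to $\tfrac23 r-3\ge r/15$ would not work. It gives a constant $76.5$, and at $r=5$ it is exact, so no bookkeeping rescues it. The exact sum is therefore essential, not optional. Your second-term estimate is the paper's. Both of you inherit from Lemma~\ref{lemma:tildeK_approx} the degree $(N+2)r$ for $p_{r,N}^{(a)}M_r^{(a)}$. The correct degree bound is $2r(N+1)$, but $\sqrt{2(2r(N+1)+1)}\le 4r$ still holds for $r\ge5$, so the $4/r^2$ term survives.

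The paper's proof does not address the SOS and degree claim; here your diagnosis is right on both counts.
\begin{itemize}
\item Since $\|\cdot\|_\infty\le\|\cdot\|_\cheb$, for $k\ge1$ the polynomial $\widetilde{\mathcal K}_{r,N}^{(\hat a)}(T_k)$ is uniformly within the error bound of $T_k$. It is therefore negative somewhere on $[-1,1]$ once that bound drops below $1$. So the claim only makes sense for nonnegative inputs, as in the remark following \eqref{def:kerneltilde}.
\item Lemma~\ref{lem:ratio} needs $N$ even.
\end{itemize}

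Of your two repairs, only rounding $N$ up works. The base $1-M_r^{(\hat a)}/C$ lies in $[0,\tfrac12]$ on $[-1,1]$ but tends to $-\infty$ outside it. A geometric sum with $N$ odd is therefore negative for large $|x|$ and cannot be SOS.

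Rounding up does not literally ``only strengthen'' the bound, since the factor $N+1$ grows, but your slack absorbs this. For the even $N'\le 3\log_2 r+2$ one still has $N'+1\le 3(\log_2 r+1)$ and $(1/2)^{N'}\le r^{-3}$. The square-root factor, with either degree count, is still at most $4r$. The degree you then obtain is $2r(N'+1)$, which can exceed the stated $2r(1+\lceil\log_2 r^3\rceil)$ by $2r$.
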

\begin{proof}
By Theorem \ref{theo:main}, we have, for $r \ge 5$, after using $\phi(x) = x$,
\begin{equation}\label{bound_from_thm2}
  \|\widetilde {\mathcal K}_{r,N}^{(\hat a)}(T_k) - T_k\|_{\cheb}  \leq \frac{2(N+1)}{  \tfrac 2{3}r  - 3}\left(\left( \frac kr \right)^2 + \frac {3 k^2}{2r}\right) + \sqrt{2((N+2)r+1)}\left(\frac{1}{2}\right)^{N}.
\end{equation}
Using $N = \lceil 3\log_2 r \rceil$, we may bound the second right-hand-side term in \eqref{bound_from_thm2} by 
\begin{eqnarray*}
\sqrt{2((N+2)r+1)}\left(\frac{1}{2}\right)^{N} &\le & \frac{\sqrt{2[(\lceil 3\log_2 r\rceil +2)r+1]}}{r^3} \\
                                                  &\le & \frac{\sqrt{6r\log_2 r +6r+2} }{r^3} < \frac{4}{r^2},\\
\end{eqnarray*}
where we have used $\lceil 3\log_2 r\rceil \le   3\log_2 r +1$, and $\log_2 r \le r$. Similarly, one 
may check by elementary calculations that the first  right-hand-side term in \eqref{bound_from_thm2} may be upper bounded by the simpler expression
\[
\frac{2(N+1)}{  \tfrac 2{3}r  - 3}\left(\left( \frac kr \right)^2 + \frac {3 k^2}{2r}\right) \le \frac{45k^2(\log_2 r + 1)}{r^2} \quad (r \ge 5).
\]
Combining the upper bounds on the two right-hand-side terms in \eqref{bound_from_thm2} leads to the required result.
\end{proof}

\subsection{Extension to the multivariate case}
\label{sec:multivariate extension}
We now construct a multivariate kernel $K^{( a)}_{r,n}(\x,\y)$ for $\x,\y \in [-1,1]^n$, by multiplying univariate kernels:
\begin{equation}
\label{multivariate kernel}
K^{(a)}_{r,n}(\x,\y) := \prod_{i \in [n]} K^{(a)}_r(x_i,y_i),
\end{equation}
where $[n] = \{1,\ldots,n\}$.
Proceeding as before, we define a convolution operator,
\[
\widetilde{\mathcal{K}}^{(a)}_{r,n,N}f(\x) := \prod_{i \in [n]} p_{r,N}^{(a)}(x_i)\int_{[-1,1]^n} f(y) K^{(a)}_{r,n}(\x,\y)d\mu(\y),
\]
where $d\mu(\y) = d\mu(y_1)\ldots d\mu(y_n)$.
The operator  $\widetilde{\mathcal{K}}^{(a)}_{r,n,N}$, approximately preserves multivariate Chebyshev polynomials $T_\alpha(\x) = \prod_{i \in [n]} T_{\alpha_i}(x_i)$ in the following sense.
\begin{lemma} \label{thrm:approxIDmulti}
    Let $\alpha \in (\N_0)^n$ be such that $\alpha_i \leq d$ for all $i \in [n]$.
    Also assume that, for given $\eps >0$, one has $\|\widetilde{\mathcal K}^{(a)}_{r,N}(T_{\alpha_i})- T_{\alpha_i}\|_\cheb \le \eps$ for all $i \in [n]$.
     Then we have
    \[
    \|\widetilde{\mathcal K}^{(a)}_{r,n,N} T_\alpha - T_\alpha\|_\cheb \leq \eps \sum_{i=0}^{n-1} (1+\eps)^i.
    \]
    Moreover, if $\eps \leq 1/n$, then $\|\widetilde{\mathcal K}^{(a)}_{r,n} T_\alpha - T_\alpha\|_\cheb \leq e \cdot \eps \cdot n$, where $e \approx 2.71828$ is the Euler number.
\end{lemma}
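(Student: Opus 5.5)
The multivariate operator factorizes over coordinates: since $K^{(a)}_{r,n}$ is a product of univariate kernels, $\mu$ is a product measure, and $T_\alpha(\y)=\prod_i T_{\alpha_i}(y_i)$, Fubini gives
\[
\widetilde{\mathcal K}^{(a)}_{r,n,N} T_\alpha(\x) = \prod_{i\in[n]} p^{(a)}_{r,N}(x_i)\int_{-1}^1 T_{\alpha_i}(y_i)K^{(a)}_r(x_i,y_i)\,d\mu(y_i) = \prod_{i\in[n]} g_i(x_i),
\]
where $g_i := \widetilde{\mathcal K}^{(a)}_{r,N}(T_{\alpha_i})$ is a univariate polynomial. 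We therefore need to compare $\prod_i g_i(x_i)$ with $\prod_i T_{\alpha_i}(x_i)$ in $\|\cdot\|_\cheb$.

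Write $g_i = T_{\alpha_i} + e_i$ with $\|e_i\|_\cheb\le\eps$. We telescope:
\[
\prod_{i=1}^n g_i - \prod_{i=1}^n T_{\alpha_i} = \sum_{m=1}^{n} \Bigl(\prod_{i<m} T_{\alpha_i}\Bigr)\, e_m \,\Bigl(\prod_{i>m} g_i\Bigr).
\]
For polynomials in disjoint variables, the Chebyshev coefficients of the product are exactly the products of coefficients, since $T_\beta(\x)T_\gamma(\x)$ is again a basis element when the supports of $\beta,\gamma$ are disjoint. Hence $\|\cdot\|_\cheb$ is multiplicative on such products; sub-multiplicativity would also suffice. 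Using $\|T_{\alpha_i}\|_\cheb=1$ and $\|g_i\|_\cheb\le 1+\eps$, the $m$-th term has norm at most $\eps(1+\eps)^{n-m}$. Summing over $m$ gives $\eps\sum_{i=0}^{n-1}(1+\eps)^i$, which is the first claim.

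For the second claim, bound each factor by $(1+\eps)^{n-1}$, so the sum is at most $n\eps(1+\eps)^{n-1}$. If $\eps\le 1/n$, then $(1+\eps)^{n-1}\le(1+1/n)^{n}\le e$, yielding $e\cdot\eps\cdot n$. Alternatively, the sum equals $(1+\eps)^n-1$, and we can use $(1+\eps)^n - 1\le n\eps(1+\eps)^{n-1}$.

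The only delicate point is the factorization step: justifying Fubini for the product kernel and the product prefactor $\prod_i p_{r,N}^{(a)}(x_i)$, and checking that the univariate hypothesis is applied coordinatewise with the same $N$ and $a$. This is routine since everything is polynomial. The norm-multiplicativity for disjoint variables is immediate from the tensor structure of the basis, so I expect no real obstacle.
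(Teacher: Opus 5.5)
Your proof is correct. The paper gives no in-text argument for this lemma; it only invokes \cite[Lemma 4.2]{GdKV26}, a general result on products of univariate kernels. Your argument is a self-contained version of what that citation supplies. You first factorize $\widetilde{\mathcal K}^{(a)}_{r,n,N}T_\alpha(\x)=\prod_{i\in[n]}\widetilde{\mathcal K}^{(a)}_{r,N}(T_{\alpha_i})(x_i)$, and then telescope, using that $\|\cdot\|_\cheb$ is multiplicative over factors in disjoint variables. This yields exactly the bound $\eps\sum_{i=0}^{n-1}(1+\eps)^i=(1+\eps)^n-1$, which an induction on $n$ would also give. One advantage of your version is that it spells out this factorization, including how the prefactor $\prod_i p^{(a)}_{r,N}(x_i)$ splits over the coordinates; the paper leaves that step implicit when it appeals to the product-kernel lemma.
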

\begin{proof}
The result follows immediately from \cite[Lemma 4.2]{GdKV26}, which is a general result for the product of univariate kernels.
\end{proof}

By using Proposition \ref{tildeK_final_error}, we have the following result for the multivariate squared Fej\'er kernel.

\begin{proposition}
\label{tildeK_final_error multivariate}
 Let $\alpha \in (\N_0)^n$ be such that $\alpha_i \leq d$ for all $i \in [n]$. For $N = \lceil 3\log_2 r \rceil$, and $r \ge 45nd^2$, one has
\begin{equation*}
   \|\widetilde{\mathcal K}^{(\hat a)}_{r,n,N} T_\alpha - T_\alpha\|_\cheb \le \frac{e\cdot n (45d^2(\log_2 r + 1) +4)}{r^2}.
\end{equation*}
Moreover, for this choice of $N$, $\widetilde{\mathcal K}_{r,n,N}^{(\hat a)}(T_\alpha)$ is an SOS polynomial of degree  $2rn\lceil 1+3\log_2 r\rceil$.
\end{proposition}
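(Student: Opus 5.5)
The plan is to combine the univariate bound of Proposition \ref{tildeK_final_error} with the product lemma, Lemma \ref{thrm:approxIDmulti}. First I fix $N = \lceil 3\log_2 r\rceil$. For each coordinate $i\in[n]$ we have $0\le \alpha_i\le d\le r$; indeed $r\ge 45nd^2\ge d$, and also $r\ge 45\ge 5$. So Proposition \ref{tildeK_final_error} applies with $k=\alpha_i$, and since $\alpha_i^2\le d^2$ it gives
\[
\|\widetilde{\mathcal K}^{(\hat a)}_{r,N}(T_{\alpha_i}) - T_{\alpha_i}\|_\cheb \le \eps := \frac{45d^2(\log_2 r+1)+4}{r^2}
\]
for all $i\in[n]$. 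The case $\alpha_i=0$ is covered too, because the right-hand side is monotone in $k$.

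Second, I need to verify $\eps\le 1/n$, so that the ``moreover'' part of Lemma \ref{thrm:approxIDmulti} yields $\|\widetilde{\mathcal K}^{(\hat a)}_{r,n,N}T_\alpha - T_\alpha\|_\cheb\le e\,n\,\eps$, which is exactly the claimed bound. This requires $n(45d^2(\log_2 r+1)+4)\le r^2$. With $r\ge 45nd^2$ we get
\[
r^2 \ge 45nd^2\, r .
\]
It therefore suffices to show $r\ge \log_2 r + 1 + \tfrac{4}{45d^2}$. This holds for $r\ge 45$, since $\log_2 45 + 2 < 8 < 45$; it also holds whenever $d\ge 1$. The degenerate case $d=0$ means $\alpha=0$, and it can be handled directly: in that case $\widetilde{\mathcal K}(1)=\prod_i p_{r,N}M_r$, and the univariate bound $4/r^2$ gives the estimate via the first part of Lemma \ref{thrm:approxIDmulti}. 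Alternatively, one can read the hypothesis as implicitly assuming $d\ge1$. This inequality check is the only step needing care, and I expect it to be the main (though minor) obstacle: the logarithmic factor has to be absorbed by the linear factor $r$.

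Third, for the degree and SOS claim, I write the operator as a product over coordinates of the univariate operators. Each factor $p^{(\hat a)}_{r,N}(x_i)\int T_{\alpha_i}(y_i)K_r^{(\hat a)}(x_i,y_i)\,d\mu(y_i)$ has degree at most $2rN + 2r$. Summing over the $n$ coordinates gives total degree at most $2rn(N+1)=2rn\lceil 1+3\log_2 r\rceil$. For the SOS property, I argue as in the univariate case. The polynomial $p_{r,N}^{(\hat a)}$ is SOS because $N$ is even when chosen appropriately (if needed, I replace $N$ by the next even integer, which only improves the geometric term). The integral of a nonnegative $f$ against the squared kernel is SOS by a Gaussian quadrature rule for the product Chebyshev measure, exact at the relevant degree, which expresses it as a nonnegative combination of squares $S_r(x_i,y^{(l)})^2$. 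Products of SOS polynomials are SOS, which gives the conclusion.
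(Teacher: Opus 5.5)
Your proposal is correct and is essentially the paper's proof: apply Proposition~\ref{tildeK_final_error} coordinatewise with $k=\alpha_i\le d$, verify $\eps\le 1/n$ from $r\ge 45nd^2$ (you absorb the logarithm via $r\ge \log_2 r+1+\tfrac{4}{45d^2}$, the paper via $\log_2 r+1<\tfrac23 r$), and invoke Lemma~\ref{thrm:approxIDmulti}. Your SOS/degree discussion goes beyond the paper's one-line proof, and the parity issue you raise is real: for odd $N$, $p^{(\hat a)}_{r,N}$ is negative for large $|x|$ and so is not SOS. However, replacing $N$ by $N+1$ raises the degree to $2rn(\lceil 3\log_2 r\rceil+2)$ and multiplies the first error term by $(N+2)/(N+1)$, so it does not ``only improve'' things, even though the slack in the constant $45$ still absorbs the larger error term for $r\ge 45$.
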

\begin{proof}
The result follows immediately from Lemma \ref{thrm:approxIDmulti} and Proposition \ref{tildeK_final_error}, after ensuring that the $\eps \le 1/n$ condition of Lemma \ref{thrm:approxIDmulti} holds. To this end, we need to ensure that $r$ is sufficiently large, so that 
\begin{equation}\label{one over n condition}
\eps =   \frac{45d^2(\log_2 r + 1) +4}{r^2} \le \frac{1}{n},
\end{equation}
where the  expression for $\eps$ is from Proposition \ref{tildeK_final_error} with $k=d$, and holds when $r \ge 5$.
Since $\log_2 r + 1 < \frac{2}{3}r$ for $r \ge 5$, one may easily show that \eqref{one over n condition} holds, e.g., if $r \ge 45nd^2$.
\end{proof}

\section{Main result and its consequences}
\label{sec:Main result and its consequences}
In this section we first state and prove our main result on sum-of-squares approximation of nonnegative polynomials on $[-1,1]^n$. Then we explore the implications for two different hierarchies for approximating the minimum of a polynomial on the hypercube, both due to Lasserre.
\subsection{SOS approximations of nonnegative polynomials on the hypercube}
\label{sec:SOS approximations of nonnegative polynomials on the hypercube}
  \begin{theorem}
  \label{thm:main}
Assume $f(\x) = \sum_{\alpha \in \N^n_d} f_\alpha T_\alpha (\x)$  is nonnegative on $[-1,1]^n$.
There exist absolute, positive constants $c_1\le 70{,}458$ and $c_2 \le 540$, so that,
for any integer $r$ with $n|r$ and $r/\log_2 r \ge c_2n^2d^2$, there exists
a $p \in \Sigma[\x]_{r}$ such that
\[
\|p - f \|_\cheb
\le
c_1n^3d^{2}\|f\|_\cheb \cdot{\frac{\log^3_2(r)}{r^2}}.
\]
\end{theorem}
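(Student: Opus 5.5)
The idea is to apply the multivariate squared Fej\'er operator to $f$ and then fix the slight loss of nonnegativity and mismatch of coefficients by adding a constant.

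\textbf{Choice of parameters.} Given $r$ with $n\mid r$, I choose a univariate parameter $s$ and $N=\lceil 3\log_2 s\rceil$ such that
\[
2sn\lceil 1+3\log_2 s\rceil\le r .
\]
This is possible with $s\approx r/(6n\log_2 r)$, up to a constant factor; the divisibility $n\mid r$ and rounding just let one pick an integer $s$. By Proposition~\ref{tildeK_final_error multivariate}, for each $\alpha$ with $\alpha_i\le d$ the polynomial $\widetilde{\mathcal K}^{(\hat a)}_{s,n,N}T_\alpha$ has degree at most $r$. Since $\widetilde{\mathcal K}^{(\hat a)}_{s,n,N}$ maps nonnegative functions on $[-1,1]^n$ to SOS polynomials, $\widetilde{\mathcal K}^{(\hat a)}_{s,n,N}f$ lies in $\Sigma[\x]_r$. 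The hypothesis $r/\log_2 r\ge c_2n^2d^2$ is what guarantees $s\ge 45nd^2$, so that Proposition~\ref{tildeK_final_error multivariate} applies. Checking this is routine bookkeeping: $s\gtrsim r/(n\log_2 r)\ge c_2 nd^2/6$.

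\textbf{Error estimate.} Let
\[
\eps' := \frac{e\, n\bigl(45d^2(\log_2 s+1)+4\bigr)}{s^2}.
\]
By linearity and the triangle inequality,
\[
\|\widetilde{\mathcal K}f-f\|_\cheb\le \sum_\alpha |f_\alpha|\,\|\widetilde{\mathcal K}T_\alpha-T_\alpha\|_\cheb\le \eps'\,\|f\|_\cheb .
\]
Substituting $s\asymp r/(n\log_2 r)$ and $\log_2 s\le\log_2 r$ gives
\[
\eps'\lesssim \frac{n\,d^2\log_2 r\cdot n^2\log_2^2 r}{r^2}=\frac{n^3d^2\log_2^3 r}{r^2}.
\]
This matches the claimed rate. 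Tracking the constants (the factor $e$, the $45$, the factor $6$ coming from $2\lceil 1+3\log_2 s\rceil$, and the rounding of $s$) should produce $c_1\le 70{,}458$.

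\textbf{Is $p=\widetilde{\mathcal K}f$ enough, or is a correction needed?} Since $f\ge0$ on $[-1,1]^n$, positivity of the kernel and of $\prod_i p^{(\hat a)}_{s,N}(x_i)$ (which is SOS) makes $\widetilde{\mathcal K}f$ an SOS polynomial directly. The SOS property of $\widetilde{\mathcal K}f$ follows from a quadrature argument: write $\int f(\y)K(\x,\y)\,d\mu(\y)$ as a positive-weight sum $\sum_\ell w_\ell f(\y_\ell)K(\x,\y_\ell)$, using Gauss--Chebyshev quadrature exact in $\y$ up to the relevant degree. Each $K(\x,\y_\ell)$ is a square in $\x$, and multiplying by the SOS polynomial $\prod_i p_{s,N}(x_i)$ keeps the result SOS. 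So $p:=\widetilde{\mathcal K}^{(\hat a)}_{s,n,N}f$ works and no constant shift is needed.

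The main obstacle is the parameter bookkeeping. The degree budget $r$ must absorb the $\log$ factor from $N$, and this inflation of $s$ by $n\log r$ is exactly where $n^3$ and $\log^3 r$ come from. At the same time, $s\ge 45nd^2$ and $s\ge5$ must follow from $r/\log_2 r\ge c_2n^2d^2$ with explicit constants.
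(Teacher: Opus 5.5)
Your proposal is correct and is essentially the paper's proof: apply $\widetilde{\mathcal K}^{(\hat a)}_{s,n,N}$ with the kernel parameter shrunk by a factor of order $n\log r$ so that the output lies in $\Sigma[\x]_r$, then re-express the error from Proposition~\ref{tildeK_final_error multivariate} in terms of the target degree. The paper runs this bookkeeping in the opposite direction, fixing the kernel parameter and naming the resulting degree $\rho$, whereas your choice of the largest admissible $s$ handles arbitrary $r$ a bit more cleanly. One detail that both you and the paper gloss over: $N=\lceil 3\log_2 s\rceil$ should be rounded up to an even integer, because Lemma~\ref{lem:ratio} only gives $p^{(\hat a)}_{s,N}\in\SoS$ for even $N$ (for odd $N$ it is negative for large $|x|$), and this changes only the constants.
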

\begin{proof}
By Proposition \ref{tildeK_final_error multivariate}, for 
$N = \lceil 3\log_2 r \rceil$, and $r \ge 45nd^2$, one has
\begin{eqnarray*}
\|\widetilde{\mathcal K}^{(\hat a)}_{r,n,N} f - f\|_\cheb & = & \left\|\sum_{\alpha \in \N^n_d} f_\alpha\left(\widetilde{\mathcal K}^{(\hat a)}_{r,n,N} T_\alpha - T_\alpha\right)\right\|_\cheb \\ 
&\le & \sum_{\alpha \in \N^n_d} |f_\alpha|\left\|\widetilde{\mathcal K}^{(\hat a)}_{r,n,N} T_\alpha - T_\alpha\right\|_\cheb \\ 
&\le & \|f\|_\cheb \cdot \frac{e\cdot n (45d^2(\log_2 r + 1) +4)}{r^2}.
\end{eqnarray*}
Setting $p:= \widetilde{\mathcal K}^{(\hat a)}_{r,n,N} f$ it follows that $p \in \Sigma[\x]_\rho$ with $\rho := 3rn\lceil 1+ 3 \log_2 r\rceil$.
We may now write the error bound in terms of $\rho$ as follows:
\begin{eqnarray*}
% \nonumber % Remove numbering (before each equation)
  \|f\|_\cheb \cdot\frac{e\cdot n (45d^2(\log_2 r + 1) +4)}{r^2} &=& \|f\|_\cheb \cdot \frac{3^2n^2\lceil 1+ 3 \log_2 r\rceil^2 \cdot e\cdot n (45d^2(\log_2 r + 1) +4)}{\rho^2} \\
 &\le & \|f\|_\cheb \cdot\frac{9\cdot 45\cdot e \cdot n^3d^2\lceil 1+ 3 \log_2 r\rceil^3}{\rho^2} \\
  &\le & \|f\|_\cheb \cdot\frac{9\cdot 45\cdot e \cdot n^3d^2\lceil 1+ 3 \log_2 \rho\rceil^3}{\rho^2} \\
   &\le & \|f\|_\cheb \cdot\frac{9\cdot 45\cdot e \cdot n^3d^2\lceil 4\log_2 \rho\rceil^3}{\rho^2} \\
   &\le & c_1\cdot \|f\|_\cheb \cdot n^3d^{2} \cdot{\frac{\log^3_2(\rho)}{\rho^2}},
\end{eqnarray*}
where $c_1\le 70{,}458$ is an absolute constant.
It remains to formulate the lower bound  $r \ge 45nd^2$ from Proposition \ref{tildeK_final_error multivariate} in terms of $\rho$.
By the definition of $\rho$, the required lower bound will hold if
\[
\rho \ge  45\cdot 3n^2d^2\lceil 1+ 3 \log_2 \rho\rceil,
\]
which in turn will hold if 
\[
\frac{\rho}{\log_2 \rho} \ge 540n^2d^2.
\]
\end{proof}

\begin{remark}
The error bound in Theorem \ref{thm:main}
is tight in terms of the dependence on $r$ up to a poly-logarithmic factor, due to the following result.

\begin{theorem}[Theorem 5.3 in \cite{GdKV26}]
Consider the univariate polynomial $p(x) := 1-x^2$. There exists an absolute constant $C>0$ such that
 $\|p-q\|_\cheb \ge \frac{C}{r^2}$ for any $r \in 2\N$ and $q \in \Sigma[x]_r$.
\end{theorem}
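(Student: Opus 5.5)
The statement is the lower bound cited from \cite{GdKV26}: every $q\in\Sigma[x]_r$ satisfies $\|(1-x^2)-q\|_\cheb\ge C/r^2$. I would prove it by duality. First find a linear functional $L$ on $\R[x]_r$ such that
\begin{itemize}
\item $L(q)\ge 0$ for every $q\in\Sigma[x]_r$,
\item $L(1-x^2)\le -c/r^2$, and
\item $|L(T_k)|\le 1$ for all $k\le r$.
\end{itemize}
Then for any $q\in\Sigma[x]_r$,
\[
\|p-q\|_\cheb\ \ge\ \sup_k|L(T_k)|\cdot\|p-q\|_\cheb\ \ge\ |L(p-q)|\ \ge\ L(q)-L(p)\ \ge\ c/r^2 .
\]

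The natural choice is a quadrature functional $L(g)=\sum_i w_i\,g(x_i)$ with $w_i\ge 0$. It is automatically nonnegative on all of $\Sigma[x]_r$, whatever the nodes. To make $L(1-x^2)<0$, some nodes must lie outside $[-1,1]$, say at $\pm(1+\delta)$. I would take Gauss-type quadrature for the Chebyshev measure, pushed slightly outward. Concretely, let $x_i=(1+\delta)\cos\theta_i$ at the $m=r/2+1$ Chebyshev nodes, with the Gauss weights $w_i=1/m$.

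For polynomials $g$ of degree $\le 2m-1\ge r$, this gives
\[
L(g)=\int g\big((1+\delta)t\big)\,d\mu(t).
\]

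The key steps, in order:
\begin{enumerate}
\item Compute $L(1-x^2)=1-(1+\delta)^2\cdot\tfrac12$. This is negative, but too strongly: it is of order a constant, not $1/r^2$. So the scaling must be tuned differently.
\item Better: take $\delta\asymp 1/r^2$. Then $L(T_k)=\int T_k((1+\delta)t)\,d\mu$, and the classical Markov-type growth bound gives $|T_k(1+\delta)|\le \cosh\!\big(k\sqrt{2\delta}\,(1+o(1))\big)=O(1)$ for $k\le r$.
\item With this choice, $|L(T_k)|$ is bounded uniformly by an absolute constant.
\item The remaining problem is that $L(1)=1$ while $L(x^2)=(1+\delta)^2/2$, so $L(1-x^2)>0$. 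A dilated measure alone does not suffice. I need a measure concentrated near the endpoints, where $1-x^2$ is small, yet still matching a positive quadrature.
\end{enumerate}

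So I would instead take $L(g)=\tfrac12\big(g(1+\delta)+g(-1-\delta)\big)-\eta\,\ell(g)$, where $\ell$ is a correction designed so that $L$ stays nonnegative on squares. The cleanest route is to use the extremal function. Let
\[
s(x)=\Big(\frac{T_{r/2}(x)}{T_{r/2}(1+\delta)}\Big)^2 .
\]
Any square $q=h^2$ with $\deg h\le r/2$ is controlled at $1+\delta$ by its values on $[-1,1]$ via Chebyshev's extremal property:
\[
|h(1+\delta)|\le T_{r/2}(1+\delta)\,\|h\|_{\infty,[-1,1]} .
\]
Because $T_{r/2}(1+\delta)=O(1)$ when $\delta\asymp 1/r^2$, this shows that $q$ being close to $1-x^2$ in $\|\cdot\|_\cheb$ forces $q(1+\delta)\ge -\|p-q\|_\infty$. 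On the other hand $p(1+\delta)=-2\delta-\delta^2\approx-2c'/r^2$. This yields
\[
\|p-q\|_\cheb\ \ge\ \|p-q\|_\infty\ \ge\ |p(1+\delta)-q(1+\delta)| - \text{(growth of } p-q \text{ off the interval)} .
\]

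The main obstacle is exactly this last step. One must bound $|(p-q)(1+\delta)|$ by $O(\|p-q\|_\cheb)$, using $|T_k(1+\delta)|=O(1)$ for $k\le r$, and then use $q(1+\delta)\ge0$, which holds since $q$ is a sum of squares on all of $\R$. Thus
\[
2\delta\ \le\ q(1+\delta)-p(1+\delta)\ \le\ \max_{k\le r}|T_k(1+\delta)|\cdot\|p-q\|_\cheb\ \le\ K\|p-q\|_\cheb,
\]
giving $\|p-q\|_\cheb\ge 2\delta/K$. Choosing $\delta=1/r^2$, so that $K\le\cosh(\sqrt2+o(1))$, gives $C\approx 2/\cosh(2)$. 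This last route actually avoids the quadrature altogether, and it is the one I would write up.
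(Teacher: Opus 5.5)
The paper does not prove this statement; it only quotes it from \cite{GdKV26}, so there is no in-paper argument to compare against. Judged on its own, the route you settle on at the end is a complete and correct proof.

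It is in fact your original duality scheme with the simplest possible functional, the point evaluation $L(g)=g(1+\delta)/T_r(1+\delta)$. This functional meets all three of your requirements:
\begin{itemize}
\item It is nonnegative on $\Sigma[x]_r$, since sums of squares are nonnegative on all of $\R$.
\item It satisfies $|L(T_k)|\le 1$ for $k\le r$.
\item It gives $L(1-x^2)=-(2\delta+\delta^2)/T_r(1+\delta)$.
\end{itemize}
So the quadrature detour can be dropped entirely. Step 1's sign claim is also wrong, since $1-(1+\delta)^2/2>0$ for small $\delta$. The extremal function $s(x)$ is never used. The display bounding $\|p-q\|_\infty$ from below by a value at $1+\delta$ is not a valid inequality as written: the sup-norm on $[-1,1]$ does not control values outside the interval without a growth factor. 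All of these should be removed.

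When you write it up, justify the growth bound explicitly. For $x=\cosh t\ge 1$ one has $T_k(x)=\cosh(kt)$, which is positive and nondecreasing in $k$. The inequality $\cosh s\ge 1+s^2/2$ gives $\operatorname{arccosh}(1+\delta)\le\sqrt{2\delta}$. Hence $\max_{k\le r}|T_k(1+\delta)|\le\cosh(r\sqrt{2\delta})$.

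With $\delta=1/r^2$ this bound is $\cosh\sqrt2$. Writing $q-p=\sum_{k\le r}c_kT_k$, you get $2\delta\le (q-p)(1+\delta)\le \cosh(\sqrt2)\,\|p-q\|_\cheb$. The constant is therefore $C=2/\cosh\sqrt2\approx 0.92$. Your $2/\cosh 2$ is a slip, though it is still a valid, smaller constant.

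Finally, state that $r\ge 2$. This ensures $\deg(p-q)\le r$, so the Chebyshev expansion of $p-q$ involves only $T_0,\dots,T_r$.
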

\end{remark}

\subsection{Improved rate of convergence for the Lasserre hierarchy of lower bounds}
\label{sec:Improved rate of convergence for the Lasserre hierarchy of lower bounds}
Consider the problem
\begin{equation}
\label{prob:origin}
f_\min := \min_{\x \in [-1,1]^n} f(\x),
\end{equation}
with $f$ a degree $d$ polynomial, and define $f_\max$ analogously.
We describe $[-1,1]^n$ via the constraints
\[
  g_i(\x) := 1-x_i^2 \ge 0 \quad(i \in \{1,\ldots,n\}).
\]
The associated {truncated  quadratic module} of order $r$, generated by $\mathbf{g} = (g_1,\ldots,g_n)$, is defined as
\[
\mathcal{Q}(\mathbf{g})_r = \Sigma[\x]_r + \sum_{i=1}^n g_i \Sigma[\x]_{r - 2}.
\]
For this description of the hypercube, the Lasserre hierarchy \cite{doi:10.1137/S1052623400366802} for problem \eqref{prob:origin} is defined by:
\begin{equation}
\label{eq:lasserre hierarchy}
f_{(r)} := \sup \left\{ t \; : \; f - t \in \mathcal{Q}(\mathbf{g})_r\right\} \quad r \in \N.
 \end{equation}
One has $f_{(r)} \le f_{\min}$ for all $r$ and $f_{(r)} \rightarrow f_\min$ as $r \rightarrow \infty$.
The best result on the rate of convergence is the following $O(1/r)$ result due to Baldi and Slot \cite{doi:10.1137/23M1555430}.

\begin{theorem}[cf.\  Corollary 15 in \cite{doi:10.1137/23M1555430}]
\label{cor:Baldi_Slot}
Consider problem \eqref{prob:origin}, and let $r \in \N$ satisfy $n|r$ as well as
\[
r\ge  4 \Chebyconst \cdot d^2(n\log n) + n  + 2n\sqrt{\Chebyconst C(n,d)},
\]
where $\Chebyconst \in [1,e^5]$ is an absolute constant with $e \approx 2.71828$ being Euler's number, and $C(n, d)$ is a constant that only depends on $(n,d)$, and satisfies:
\[
2\pi^2d^2n \binom{n+d}{d} \le C(n, d)\le 2\pi^2d^2n\cdot \min \left\{ 2^{n/2}(d + 1)^n, 2^{d/2}(n + 1)^d\right\}.
\]
Then
\[
% \label{eq:lasserre hierarchy}
f_{(r)} - f_\min \le \frac{4 \Chebyconst \cdot d^2(n\log n)}{r}(f_\max - f_\min).
\]
\end{theorem}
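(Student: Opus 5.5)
The plan is to follow the polynomial kernel method, as in Baldi and Slot \cite{doi:10.1137/23M1555430}, but with a kernel that lands in the truncated quadratic module $\mathcal{Q}(\mathbf{g})_r$ rather than in $\Sigma[\x]_r$.

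\textbf{Normalisation.} Replace $f$ by $(f-f_\min)/(f_\max-f_\min)$. We may then assume $f_\min=0$ and $f_\max=1$, so $0\le f\le 1$ on $[-1,1]^n$. It then suffices to show that $f+\eta\in\mathcal{Q}(\mathbf{g})_r$ for
\[
\eta = \frac{4\Chebyconst\, d^2 n\log n}{r}.
\]

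\textbf{Kernel.} Set $m:=r/n$, which is an integer because $n|r$. Choose a univariate kernel
\[
K_m(x,y)=1+2\sum_{k=1}^m \lambda_k T_k(x)T_k(y)
\]
that is nonnegative on $[-1,1]^2$, for instance a Jackson-type kernel. Its multipliers should satisfy
\[
|1-\lambda_k|\le \Chebyconst\, k^2\log n/m^2 \quad\text{for } k\le d,
\]
and $\lambda_k$ bounded away from $0$. For fixed $y$, the map $x\mapsto K_m(x,y)$ is a degree-$m$ polynomial that is nonnegative on $[-1,1]$. By the Luk\'acs--Markov representation it lies in $\Sigma[x]_m+(1-x^2)\Sigma[x]_{m-2}$.

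Take the product kernel $K(\x,\y)=\prod_i K_m(x_i,y_i)$ and expand the product of the univariate certificates. Each cross term $(1-x_i^2)(1-x_j^2)\sigma$ must be reabsorbed into $\mathcal{Q}(\mathbf{g})_r$, which is done via $1-x_i^2\le 1$-type identities at a modest degree cost. This is where the $n|r$ bookkeeping enters. Integrating against a positive Chebyshev quadrature rule in $\y$ then shows that $\mathcal{K}(h)\in\mathcal{Q}(\mathbf{g})_r$ whenever $h\ge0$ on the cube.

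\textbf{Inverting the kernel.} Since $\mathcal{K}(T_\alpha)=\lambda_\alpha T_\alpha$ with $\lambda_\alpha=\prod_i\lambda_{\alpha_i}$, the operator $\mathcal{K}$ is invertible on $\R[\x]_d$. Write
\[
f+\eta=\mathcal{K}(h), \qquad h=\mathcal{K}^{-1}f+\eta .
\]
We need $h\ge0$, and since $f\ge0$ this holds as soon as $\eta\ge\|\mathcal{K}^{-1}f-f\|_\infty$. Now
\[
\|\mathcal{K}^{-1}f-f\|_\infty\le \sum_\alpha |f_\alpha|\,|\lambda_\alpha^{-1}-1|,
\]
and the multiplier bound gives $|\lambda_\alpha^{-1}-1|\lesssim \Chebyconst\, n d^2\log n/m^2$ whenever $m$ is large enough that every $\lambda_\alpha\ge 1/2$. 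The condition $r\ge 4\Chebyconst d^2 n\log n+n$ is designed to guarantee this.

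\textbf{Where the $1/r$ rate comes from.} Here the argument must not simply bound $\sum_\alpha|f_\alpha|$ by $\|f\|_\cheb$, since that introduces the $(n,d)$-dependent factor. Instead we would split $\mathcal{K}^{-1}f-f$ into a leading term $\sum_\alpha(\lambda_\alpha^{-1}-1)f_\alpha T_\alpha$, bounded via Markov--Bernstein-type inequalities by $\frac{\Chebyconst d^2 n\log n}{m^2}\cdot m\,\|f\|_\infty$, and a second-order remainder. The remainder is bounded by $\frac{C(n,d)\Chebyconst}{m^2}\|f\|_\infty$, using \eqref{norm equivalence relations} together with Bernstein's inequality; this is how $C(n,d)$, with its $d^2n$ and $\binom{n+d}{d}$ factors, enters. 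The hypothesis $r\ge 2n\sqrt{\Chebyconst C(n,d)}$ makes this remainder at most the leading term. The resulting total is of order $\Chebyconst d^2 n\log n/r$, which gives the stated bound after undoing the normalisation.

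\textbf{Main obstacle.} The hardest step is the construction of the product kernel. The multivariate kernel must lie in $\mathcal{Q}(\mathbf{g})_r$ rather than merely in a Schm\"udgen-type preordering, and the $1-x_i^2$ factors from different coordinates must be absorbed without losing more than a constant factor in degree. I would first try to write each univariate factor as $\sigma_0+(1-x^2)\sigma_1$ with $\sigma_1$ scaled small, and then absorb products of $g_i$'s through the identity $g_ig_j=\tfrac12\bigl(g_i g_j^2+g_j g_i^2+\dots\bigr)$. That is a concrete place where this plan may need adjustment.
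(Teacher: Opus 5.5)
The paper gives no proof of this statement; it is imported from Baldi and Slot. As typeset, $f_{(r)}-f_\min\le\cdots$ is in fact trivial, since $f_{(r)}\le f_\min$, and you are right to aim at $f_\min-f_{(r)}$. Judged on its own, your proposal has a genuine gap exactly where you place the main obstacle.

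A product of univariate Luk\'acs certificates lies in the preordering. Its cross terms cannot be reabsorbed into $\mathcal{Q}(\mathbf{g})$ by any identity, at any degree, because $(1-x_1^2)(1-x_2^2)\notin\mathcal{Q}(\mathbf{g})$. To see this, first set $x_3=\dots=x_n=0$; this sends $g_k\mapsto 1$ for $k\ge 3$ and reduces to $n=2$. Suppose $g_1g_2=\sigma_0+g_1\sigma_1+g_2\sigma_2$. On the lines $x_1=\pm1$ the right-hand side is a sum of terms that are nonnegative for $|x_2|\le 1$. Hence $\sigma_0$ and $\sigma_2$ vanish identically there, and likewise $\sigma_0,\sigma_1$ vanish on $x_2=\pm1$. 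Since these are SOS, $\sigma_0=g_1^2g_2^2s_0$, $\sigma_1=g_2^2s_1$ and $\sigma_2=g_1^2s_2$. Dividing by $g_1g_2$ gives $1=g_1g_2s_0+g_2s_1+g_1s_2$, which is false at $(1,1)$.

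Your suggested identity shows the same problem. The omitted terms are $\tfrac12 g_ig_j(x_i^2+x_j^2)$, again a cross term whose extra factor equals $1$ at the vertices, so the recursion never closes. The most you can hope for is $g_ig_j\sigma+(\text{slack})\in\mathcal{Q}(\mathbf{g})_r$. The quantitative trade-off between that slack and the degree is the actual content of the Baldi--Slot theorem, and your proposal does not supply it.

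Your rate bookkeeping confirms that everything happens in this missing step. Suppose the product kernel did map nonnegative polynomials into $\mathcal{Q}(\mathbf{g})_r$. Then your inversion argument, which is the standard Schm\"udgen-type kernel argument, would give $\eta\le\sum_\alpha|f_\alpha|\,|\lambda_\alpha^{-1}-1|=O(\|f\|_\cheb/m^2)$. That is an $O(1/r^2)$ rate for the quadratic module, which the paper's concluding section lists as open.

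Several other pieces of the accounting do not hold up:
\begin{itemize}
\item The extra factor $m$ in $\frac{\Chebyconst d^2n\log n}{m^2}\cdot m\|f\|_\infty$ has no source. With $x_i=\cos\theta_i$, the multiplier $T_\alpha\mapsto(\sum_i\alpha_i^2)T_\alpha$ is $-\sum_i\partial_{\theta_i}^2$, so Bernstein's inequality bounds the leading term by $O(nd^2/m^2)\|f\|_\infty$.
\item The $\log n$ you place in the univariate multipliers is not something a univariate Jackson-type kernel produces.
\item The hypothesis $m\ge 2\sqrt{\Chebyconst C(n,d)}$ does not make $\Chebyconst C(n,d)/m^2$ smaller than $\Chebyconst d^2n\log n/m$. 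That would need $m\gtrsim C(n,d)/(d^2n\log n)$, which is far larger when $C(n,d)\ge 2\pi^2d^2n\binom{n+d}{d}$ is large.
\end{itemize}
So the $1/r$ rate, the $\log n$ and the constant $\Chebyconst$ must all come from the preordering-to-quadratic-module step, which is exactly the part you leave open.
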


Our main result in Theorem \ref{thm:main} allows us to improve this result by showing a rate of $O(\log^3  r/r^2)$. To this end, we need the following result from
Gribling et al. \cite[Corollary 10]{GdKV26}.
\begin{theorem}
\label{thm:lasserre hierarchy error norm one}
   Consider problem \eqref{prob:origin} and its associated Lasserre hierarchy \eqref{eq:lasserre hierarchy}.
One has, for $r \ge \mathrm{deg}(f)$,
\[
f_\min - f_{(2r)} \le \min_{q \in Q(1-x_1^2,1-x_2^2,\ldots, 1-x_n^2)_{2r}} \left\| f - f_\min - q\right\|_\cheb
\le \min_{q \in \Sigma[\x]_{2r}} \left\| f - f_\min - q\right\|_\cheb.
\]
\end{theorem}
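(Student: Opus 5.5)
The plan is to treat the two inequalities separately. The right-hand inequality is immediate: $\Sigma[\x]_{2r}\subseteq \mathcal Q(\mathbf g)_{2r}$, so enlarging the feasible set can only decrease the minimum. For the left-hand inequality, fix any $q\in\mathcal Q(\mathbf g)_{2r}$ and set $e:=f-f_\min-q$ and $\varepsilon:=\|e\|_\cheb=\sum_\alpha|e_\alpha|$. It suffices to show that $t:=f_\min-\varepsilon$ is feasible in \eqref{eq:lasserre hierarchy} at order $2r$. Then $f_{(2r)}\ge f_\min-\varepsilon$, and minimizing over $q$ gives the claim. Feasibility amounts to
\[
f-t = q + \Bigl(e+\sum_\alpha |e_\alpha|\Bigr)\in \mathcal Q(\mathbf g)_{2r}.
\]
Since $q$ already lies in the module, everything reduces to showing that $\sum_\alpha |e_\alpha|\,(1+\operatorname{sign}(e_\alpha)T_\alpha)\in\mathcal Q(\mathbf g)_{2r}$. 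For this it is enough to have $1\pm T_\alpha\in\mathcal Q(\mathbf g)_{2r}$ for every $\alpha$ in the support of $e$.

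For the certificates I would use the identity
\[
1\pm T_\alpha=\tfrac12(1\pm T_\alpha)^2+\tfrac12(1-T_\alpha^2).
\]
The first term is a square. For the second, write $c_i:=T_{\alpha_i}(x_i)^2$ and telescope:
\[
1-\prod_i c_i=\sum_i (1-c_i)\prod_{j<i}c_j .
\]
Using $1-T_k(x)^2=(1-x^2)U_{k-1}(x)^2$, with $U_{k-1}$ the Chebyshev polynomial of the second kind, each summand equals $g_i$ times a square. Hence $1-T_\alpha^2\in g_1\Sigma+\cdots+g_n\Sigma$, and therefore $1\pm T_\alpha\in\mathcal Q(\mathbf g)$ with degree $2|\alpha|$.

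The main obstacle is the degree bookkeeping. The certificate above has degree $2|\alpha|$, which fits inside $\mathcal Q(\mathbf g)_{2r}$ only when $|\alpha|\le r$. However, $e$ may contain terms up to degree $2r$ coming from $q$. The first thing I would try is to reduce to that case: use the hypothesis $r\ge\deg f$ together with a Fejér--Riesz/Lukács-type argument to get univariate certificates $1\pm T_k\in\Sigma_k+(1-x^2)\Sigma_{k-2}$ of degree $k$ instead of $2k$. These come from $1\pm\cos(k\theta)$ being a nonnegative trigonometric polynomial of degree $k$; for even $k$ one has directly $1+T_k=2T_{k/2}^2$ and $1-T_k=2(1-x^2)U_{k/2-1}^2$. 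I would then combine the univariate certificates into multivariate ones through the telescoping identity, applied with $t_i=T_{\alpha_i}$ in place of the squares, while taking care that no products $g_ig_j$ arise. If this combination step fails to keep the degree within $2r$, the fallback is to read the statement as in \cite{GdKV26}, where the degree convention on $\mathcal Q(\mathbf g)_{2r}$ leaves room for the factor-two loss. In that setting the argument above goes through verbatim.
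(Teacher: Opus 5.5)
\paragraph{Verdict.} Your reduction is sound, but the key step is missing. The right-hand inequality is just $\Sigma[\x]_{2r}\subseteq\mathcal Q(\mathbf g)_{2r}$. The left-hand one follows once $1\pm T_\alpha\in\mathcal Q(\mathbf g)_{2r}$ for every $\alpha$ in the support of $e$. (The paper gives no proof of this statement; it imports it from Corollary~10 of [GdKV26].)

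\paragraph{The gap.} It is exactly the step you flag, and neither repair closes it. Since $q$ ranges over polynomials of degree up to $2r$, $e=f-f_{\min}-q$ can have Chebyshev support on all $|\alpha|\le 2r$. The hypothesis $r\ge\deg f$ only controls the part of $e$ coming from $f$. Your certificate $\tfrac12(1\pm T_\alpha)^2+\tfrac12(1-T_\alpha^2)$ has degree $2|\alpha|$, so it only covers $|\alpha|\le r$.
\begin{itemize}
\item Telescoping with $t_i=T_{\alpha_i}$ instead of $t_i^2$ fails, because the coefficients $\prod_{j<i}t_j$ have no sign.
\item Multiplying univariate Luk\'acs certificates produces terms $g_ig_j\sigma$. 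These lie in the preordering and cannot in general be moved into the quadratic module: already $g_1g_2$ is not in the quadratic module generated by $g_1,g_2$, at any degree.
\item The fallback of changing conventions proves a different statement. With $\mathcal Q(\mathbf g)_{2r}=\Sigma[\x]_{2r}+\sum_i g_i\Sigma[\x]_{2r-2}$ as defined here, a certificate in $\mathcal Q(\mathbf g)_{4r}$ only bounds $f_{\min}-f_{(4r)}$.
\end{itemize}

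\paragraph{What is missing.} You need a multivariate half-degree certificate of Fej\'er--Riesz type. Let $s_m=\alpha_1+\dots+\alpha_m$, assume $|\alpha|>r$, and let $j$ be minimal with $s_j>r$. Split $\alpha_j=k+l$ with $k=r-s_{j-1}\ge 0$. Set $P_A=\prod_{i<j}T_{\alpha_i}(x_i)$ and $P_B=\prod_{i>j}T_{\alpha_i}(x_i)$, and define
\[
u=P_AT_k(x_j),\quad \tilde u=P_AU_{k-1}(x_j),\quad v=P_BT_l(x_j),\quad \tilde v=P_BU_{l-1}(x_j),
\]
with $U_{-1}=0$. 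From $T_{k+l}=T_kT_l-(1-x^2)U_{k-1}U_{l-1}$ and $T_m^2+(1-x^2)U_{m-1}^2=1$ you get $T_\alpha=uv-g_j\tilde u\tilde v$, $u^2+g_j\tilde u^2=P_A^2$ and $v^2+g_j\tilde v^2=P_B^2$. Hence
\[
2(1\mp T_\alpha)=(u\mp v)^2+g_j(\tilde u\pm\tilde v)^2+(1-P_A^2)+(1-P_B^2).
\]
Here $\deg u=r$ and $\deg v=|\alpha|-r\le r$, so the first two terms have degree at most $2r$, with multiplier of $g_j$ of degree at most $2r-2$. The terms $1-P_A^2$ and $1-P_B^2$ are handled by your telescoping identity, since $\deg P_A\le r$ and $\deg P_B<r$. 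Only single $g_i$ ever appear. Hence $1\pm T_\alpha\in\mathcal Q(\mathbf g)_{2r}$ for all $|\alpha|\le 2r$, and with this lemma your argument proves the statement as written.
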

Consequently, we may use our result in Theorem \ref{thm:main} to derive an improved bound on the rate of convergence of the Lasserre hierarchy \eqref{eq:lasserre hierarchy}.

\begin{theorem}
\label{thm:new error bound lasserre lower bounds}
   Consider problem \eqref{prob:origin} and its associated Lasserre hierarchy \eqref{eq:lasserre hierarchy}.
There exists an absolute, positive constants $c_1\le 70{,}458$ and $c_2 \le 540$, so that,
for any integer $r$ with $n|r$ and $r/\log_2 r \ge c_2n^2d^2$,
\[
f_\min - f_{(2r)} \le
c_1n^3d^{2}\|f\|_\cheb \cdot{\frac{\log^3_2(r)}{r^2}}.
\]
\end{theorem}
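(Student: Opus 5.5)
The plan is to chain Theorem~\ref{thm:lasserre hierarchy error norm one} with the main approximation result, Theorem~\ref{thm:main}, applied to the shifted polynomial $\tilde f := f - f_\min$. This polynomial has degree $d$ and is nonnegative on $[-1,1]^n$ by definition of $f_\min$. Applying Theorem~\ref{thm:lasserre hierarchy error norm one} (valid since $2r \ge r \ge d$, which follows from $r/\log_2 r \ge c_2 n^2 d^2$) gives
\[
f_\min - f_{(2r)} \le \min_{q \in \Sigma[\x]_{2r}} \|\tilde f - q\|_\cheb \le \min_{q \in \Sigma[\x]_{r}} \|\tilde f - q\|_\cheb ,
\]
where the last step uses the inclusion $\Sigma[\x]_r \subseteq \Sigma[\x]_{2r}$. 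Theorem~\ref{thm:main} applies to $\tilde f$ under exactly the hypotheses $n|r$ and $r/\log_2 r \ge c_2 n^2 d^2$. It produces $p\in\Sigma[\x]_r$ with
\[
\|p-\tilde f\|_\cheb \le c_1 n^3 d^2 \|\tilde f\|_\cheb \,\frac{\log_2^3 r}{r^2}.
\]

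The remaining task, and the only real obstacle, is to replace $\|\tilde f\|_\cheb$ by $\|f\|_\cheb$. Shifting by a constant changes only the coefficient of $T_0$, so
\[
\|\tilde f\|_\cheb \le \|f\|_\cheb + |f_\min| \le 2\|f\|_\cheb ,
\]
using $|f_\min| \le \|f\|_\infty \le \|f\|_\cheb$ from \eqref{norm equivalence relations}. This costs a factor $2$ in the constant. Note that the stated bound $c_1\le 70{,}458$ is unchanged, and the constant in Theorem~\ref{thm:main} is not tight, so the factor has to be absorbed somewhere.

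I would first try to avoid the factor entirely. Choose the constant term of $\tilde f$ so that $|f_0 - f_\min| \le \|f\|_\cheb - \sum_{\alpha\neq 0}|f_\alpha|$ whenever $f_0\ge f_\min$ in a suitable sense. This fails in general.

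Failing that, I would absorb the $2$ by rechecking the slack in the chain of inequalities in the proof of Theorem~\ref{thm:main}. In particular, the step replacing $\lceil 1+3\log_2\rho\rceil$ by $\lceil 4\log_2\rho\rceil$ is loose, and $9\cdot 45\cdot e\cdot 4^3 \approx 70{,}457$ leaves room once the ceilings are treated more carefully. Alternatively, I would simply state the result with $\|f\|_\cheb$ replaced by $\|f-f_\min\|_\cheb$, or with the constant doubled.
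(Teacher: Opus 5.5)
Your route is the paper's: its proof is the one-line chain of Theorem~\ref{thm:lasserre hierarchy error norm one} and Theorem~\ref{thm:main}. You are right that this chain applies Theorem~\ref{thm:main} to $f-f_\min$, so it produces $\|f-f_\min\|_\cheb$ rather than $\|f\|_\cheb$; the paper's ``follows immediately'' does not mention this. Your estimate $\|f-f_\min\|_\cheb\le 2\|f\|_\cheb$ is correct. The factor $2$ cannot be removed at the level of norms: for $f=x$ one has $\|f\|_\cheb=1$ but $\|f-f_\min\|_\cheb=\|1+x\|_\cheb=2$. So your first attempt was bound to fail. The gap is in how you then dispose of the factor. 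Restating the result with $2c_1$, or with $\|f-f_\min\|_\cheb$ in place of $\|f\|_\cheb$, does not prove the statement. ``Rechecking the slack'' in the proof of Theorem~\ref{thm:main} is only asserted, not carried out. There is enough slack there, but using it means reopening a proof you are supposed to use as a black box.

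The missing idea is not to throw away half the degree. You weakened $\Sigma[\x]_{2r}$ to $\Sigma[\x]_r$, which costs roughly a factor $4$ in a bound decaying like $\log^3\rho/\rho^2$. Instead, apply Theorem~\ref{thm:main} to $f-f_\min$ with degree parameter $2r$. Its hypotheses hold: $n\mid 2r$, and since $t\mapsto t/\log_2 t$ is increasing for $t\ge e$, we get $2r/\log_2(2r)\ge r/\log_2 r\ge c_2n^2d^2$. Combined with Theorem~\ref{thm:lasserre hierarchy error norm one}, this gives
\[
f_\min-f_{(2r)}\le c_1n^3d^2\,\|f-f_\min\|_\cheb\,\frac{\log_2^3(2r)}{4r^2}\le c_1n^3d^2\,\|f\|_\cheb\,\frac{(1+\log_2 r)^3}{2r^2}\le c_1n^3d^2\,\|f\|_\cheb\,\frac{\log_2^3 r}{r^2}.
\]
The last step uses $(1+1/\log_2 r)^3\le 2$, which holds for $r\ge 15$. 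Taking $c_2=540$, the condition $r/\log_2 r\ge c_2n^2d^2$ forces $r\ge 15$. This closes the argument with the stated constants and without touching the proof of Theorem~\ref{thm:main}.
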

\begin{proof}
The proof follows immediately from Theorem \ref {thm:lasserre hierarchy error norm one} and Theorem \ref{thm:main}.
\end{proof}

\begin{remark}
    Theorem \ref{thm:new error bound lasserre lower bounds} 
    improves 
    the result of Baldi and Slot in Theorem \ref{cor:Baldi_Slot}  both in terms of the rate of convergence, and the lower bound on $r$. However, the bound by Baldi and Slot involves $f_\max - f_\min$, where we may assume w.l.o.g.\ that 
\[
\|f\|_\infty \le f_\max - f_\min \le 2\|f\|_\infty,
\]
by replacing $f$ by $f - f(0)$.
Our new bound in Theorem \ref{thm:new error bound lasserre lower bounds} is in terms of $\|f\|_\cheb$, which can be much larger than $\|f\|_\infty$; see \eqref{norm equivalence relations}.
\end{remark}

\subsection{A new convergence rate proof for the Lasserre spectral hierarchy of upper bounds}
\label{sec:A new convergence rate proof for the Lasserre spectral hierarchy of upper bounds}
Lasserre \cite{Las11} also proposed the following hierarchy of upper bounds on $f_\min$ (see \eqref{prob:origin}):
\begin{eqnarray}\label{fundr}
{f}^{(r)}:=\inf_{\sigma\in\Sigma[\y]_{2r}}\int_{[-1,1]^n}\sigma(\y)f(\y) d\mu(\y) \ \ \mbox{s.t. $\int_{[-1,1]^n}\sigma(\y)d\mu(\y)=1$,}
\end{eqnarray}
where $d\mu(\y) := d\mu(y_1)\ldots d\mu(y_n)$.
This may be interpreted as finding the probability distribution on $[-1,1]^n$, having density $\sigma\in\Sigma[\y]_{2r}$ with respect to the multivariate Chebyshev measure, that minimizes the expected function value of $f$.
One has $f^{(r)} \ge f_{\min}$ for all $r$ and $f^{(r)} \rightarrow f_\min$ as $r \rightarrow \infty$.

Problem \eqref{fundr} has a reformulation as a smallest eigenvalue problem. Indeed, ${f}^{(r)}$ is the smallest eigenvalue
of the symmetrix matrix $A$, with rows and columns indexed by $\N^n_r = \left\{\alpha \in (\N_0)^n \; : \; \sum_{i=1}^n \alpha_i \le r\right\}$, defined by:
\begin{equation}
\label{matrices A and B}
A_{\alpha, \beta} = \sum_{\delta \in \N^n_d} f_{\delta} \int_{[-1,1]^n} \widehat T_{\alpha}(\y) \widehat T_{\beta}(\y) \widehat T_{\delta}(\y) d\mu(\y) \quad \alpha, \beta \in \N^n_r.
\end{equation}
Moreover, if $\gamma$ denotes an eigenvector with unit Euclidean norm, corresponding to the minimum eigenvalue of $A$, then the optimal density function is given by
\begin{equation}\label{optimal_density}
  \sigma(\y) = \left( \sum_{\alpha \in \N^n_r} \gamma_\alpha \widehat T_\alpha(\y)\right)^2.
\end{equation}
De Klerk and Laurent \cite{DKL MOR2} proved that $f^{(r)} - f_{\min} = O(1/r^2)$ and that this dependence on $r$ is tight. (Here, the big-O notation suppresses dependence on all parameters except $r$.)
We obtain a new, constructive proof of this result from our kernel construction, as follows. This connection between the polynomial kernel method (for suitable kernels) and Lasserre's hierarchy of upper bounds is known, see for instance \cite[Chapter~6.2]{Slot_Thesis}.

\begin{proposition}
\label{prop:constructive proof}
Assume $f \in \R[\y]_d$ and $r \in \N$ such that $r \ge 3d\sqrt{n} +1$. 
One then has
\[
f^{(rn)} - f_{\min} \le {\frac{9\cdot e \cdot nd^{2}\|f\|_\cheb}{r(r-1)}},
\]
where $e \approx 2.71828$ is the Euler number.
\end{proposition}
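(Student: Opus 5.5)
We plug a concrete feasible density into \eqref{fundr}. Let $\x^*\in[-1,1]^n$ be a global minimizer of $f$ on the hypercube, and take the un-normalized squared Fej\'er kernel
\[
\sigma(\y) := \frac{K^{(\hat a)}_{r,n}(\x^*,\y)}{\int K^{(\hat a)}_{r,n}(\x^*,\y)\,d\mu(\y)} = \prod_{i\in[n]} \frac{S_r^{(\hat a)}(x^*_i,y_i)^2}{M_r^{(\hat a)}(x^*_i)}.
\]
For each fixed $\x^*$, $S_r^{(\hat a)}(x_i^*,\cdot)$ is a polynomial of degree $r$ in $y_i$. Hence $\sigma$ is a square of a polynomial of total degree $rn$, so $\sigma\in\Sigma[\y]_{2rn}$, and it is normalized. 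Therefore
\[
f^{(rn)} \le \int f\sigma\,d\mu = \frac{\mK^{(\hat a)}_{r,n}(f)(\x^*)}{\prod_i M^{(\hat a)}_r(x_i^*)},
\]
where $\mK^{(\hat a)}_{r,n}$ is the product of the univariate operators $\mK_r^{(\hat a)}$. The point is that no geometric-series approximation $p_{r,N}$ is needed: we evaluate at a single point and divide exactly. This is why no logarithm appears.

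Define the univariate operator $\mathcal L(g)(x):=\mK_r^{(\hat a)}(g)(x)/M_r^{(\hat a)}(x)$, and let $\mathcal L_n$ be its tensor product. Expanding $f=\sum_\alpha f_\alpha T_\alpha$ gives
\[
f^{(rn)}-f_{\min} \le \mathcal L_n(f)(\x^*)-f(\x^*) \le \sum_\alpha |f_\alpha|\,\bigl|\mathcal L_n(T_\alpha)(\x^*)-T_\alpha(\x^*)\bigr|.
\]
It therefore suffices to show $\|\mathcal L_n(T_\alpha)-T_\alpha\|_\infty\le 9e\,nd^2/(r(r-1))$ for every $\alpha$ with $\alpha_i\le d$.

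For the univariate estimate we use \cref{prop:boundCoeffsSqr} with $\hat a_j=j/r$. For $k\le r$ we have $\hat a_k^2=k^2/r^2$. The sum over $j$ in the second term has nonzero terms only for $j\le r$, where $(\hat a_{j+k}-\hat a_j)^2\le k^2/r^2$. We use the sup-norm rather than the crude $\|\cdot\|_\cheb$ bound of \cref{prop:boundCoeffsSqr-f}. Since $|T_m|\le 1$, this gives
\[
|\mK(T_k)-MT_k|(x)\le 2\Bigl(\tfrac{k^2}{r^2}+\tfrac12\textstyle\sum_j(\hat a_{j+k}-\hat a_j)^2+\tfrac14\sum_{j<k}(\hat a_{k-j}-\hat a_j)^2\Bigr).
\]
This is $O(k^2/r)$, and more precisely about $2k^2/r+O(k^3/r^2)$. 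Dividing by the lower bound $M_r^{(\hat a)}\ge \tfrac12+\sum_{j=1}^r(1-j/r)^2=\tfrac12+\tfrac{(r-1)(2r-1)}{6r}$ from \cref{prop:M-lower-bound} then yields
\[
\|\mathcal L(T_k)-T_k\|_\infty\le \eps:=\frac{9k^2}{r(r-1)}\quad\text{(to be verified)}.
\]
The exact sum of $(1-j/r)^2$ is what produces the $r(r-1)$ in the denominator. The multivariate step copies \cref{thrm:approxIDmulti}, with sup-norms in place of Chebyshev norms. The argument is a telescoping product: $\mathcal L(T_k)$ is bounded by $1+\eps$, because $\mathcal L$ is a positive unital operator and $|T_k|\le1$, so in fact $|\mathcal L(T_k)|\le 1$. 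This gives $n\eps$, and at worst $e\,n\eps$ under the condition $\eps\le 1/n$. That condition $9d^2/(r(r-1))\le 1/n$ is implied by $r\ge 3d\sqrt n+1$, which matches the hypothesis.

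The main obstacle is the bookkeeping in the univariate bound. The three sums from \cref{prop:boundCoeffsSqr} must be controlled sharply enough to get the constant $9$ and the factor $r(r-1)$. In particular, the term $\sum_{j\le r}(\hat a_{j+k}-\hat a_j)^2$ should be computed exactly: it equals $(r-k)k^2/r^2+\sum_{j=r-k+1}^{r}(1-j/r)^2$. It should then be compared with $M\ge (2r^2+1)/(6r)$, rather than using \cref{lem:f-bound}. If the constant does not come out as $9$, I would fall back on the cruder route via \cref{prop:boundCoeffsSqr-f} and the second bound in \cref{prop:M-lower-bound}, accepting a slightly different absolute constant.
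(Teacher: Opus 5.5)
Your proposal is correct. It uses the same feasible density and the same reduction to univariate Chebyshev errors as the paper; the differences lie in how the two estimates are carried out.

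\emph{Univariate step.} The paper combines $\|\mK_r^{(\hat a)}(T_k)-M_r^{(\hat a)}T_k\|_\cheb<3k^2/r$ from \cref{prop:boundCoeffsSqr-f} with the \emph{first} bound of \cref{prop:M-lower-bound}, simplified to $M_r^{(\hat a)}\ge (r-1)/3$. This gives exactly $9k^2/(r(r-1))$. So your fallback already yields the constant $9$, provided you use that bound rather than the second one ($r/3-3/2$). Your sharper route also works, and the step you left ``to be verified'' holds with room to spare. For $\hat a_j=j/r$ one gets exactly
\[
\hat a_k^2+\sum_{j=1}^r(\hat a_{j+k}-\hat a_j)^2+\frac12\sum_{j=1}^{k-1}(\hat a_{k-j}-\hat a_j)^2=\frac{k^2}{r}-\frac{k(k^2-1)}{2r^2}\le\frac{k^2}{r}.
\]
This is about $k^2/r$, not $2k^2/r$ as you estimated. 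Dividing by $M_r^{(\hat a)}\ge(2r^2+1)/(6r)$ gives $6k^2/(2r^2+1)<3k^2/r^2\le 9k^2/(r(r-1))$. The gain comes from summing exactly, not from switching to the sup-norm. All coefficients in \cref{prop:boundCoeffsSqr} are nonpositive, so both norms of $\mK_r^{(\hat a)}(T_k)-M_r^{(\hat a)}T_k$ equal this same quantity.

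\emph{Multivariate step.} The paper cites \cite[Lemma 4.2]{GdKV26}, which produces the factor $e$ and needs $\varepsilon\le 1/n$; that is where the hypothesis $r\ge 3d\sqrt n+1$ enters. Your observation is that $g\mapsto\mK_r^{(\hat a)}(g)/M_r^{(\hat a)}$ is positive and unital, hence a sup-norm contraction. With it, the telescoping gives $n\varepsilon$ directly, with no factor $e$ and no condition on $r$ beyond $r\ge d$.

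\emph{Comparison.} Altogether, your route proves the stronger bound $f^{(rn)}-f_{\min}\le 3nd^2\|f\|_\cheb/r^2$ for every $r\ge d$. What the paper's route buys is reuse of the Chebyshev-norm machinery built for \cref{thm:main}. There one must control $\|\cdot\|_\cheb$ of whole polynomials rather than a single point evaluation, and $\widetilde{\mathcal K}^{(\hat a)}_{r,N}$ is only approximately unital. So your contraction argument is not directly available in that setting.
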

\begin{proof}
Denote the minimizer of $f$ on $[-1,1]^n$ by $\x^*$, and define the density function
\[
\sigma(\y) := K^{(\hat a)}_{r,n}(\x^*,\y)\big/\prod_{i \in [n]} M^{(\hat a)}_r(x^*_i) \quad (\y \in [-1,1]^n),
\]
where $K^{(\hat a)}_{r,n}$ is the product of univariate kernels from \eqref{multivariate kernel} with $a = \hat a$.
By construction, $\sigma \in \Sigma_{2rn}[\y]$, and 
\[
\int_{[-1,1]^n}\sigma(\y)d\mu(\y)=1.
\]
Proposition \ref{prop:boundCoeffsSqr-f}
shows that
\[\| \mK_r^{(\hat a)}(T_k)-M_r^{(\hat a)}\,T_k \|_\cheb \le \left( \frac kr \right)^2 + \frac {3 k^2}{2r} < \frac {3 k^2}{r}.
\]
Moreover, by Proposition \ref{prop:M-lower-bound},
\[
M^{(\hat a)}_r(x^*_i)\ge \frac12 + \sum_{j=1}^r \left(\frac{j}{r}\right)^2 \ge \frac{1}{3}(r-1).
\]
Combining, we obtain, for all $i \in [n]$,
\[
|\mK_r^{(\hat a)}(T_k)(x_i^*)/M^{(\hat a)}_r(x^*_i) - T_k(x_i^*)| \le \frac{9k^2}{r(r-1)}.
\]
We may now construct a similar bound in the multivariate case, by proceeding as we did in Section \ref{sec:multivariate extension}, and using \cite[Lemma 4.2]{GdKV26}. In particular,
for given $ \alpha \in \N^n_d$, we have 
\begin{equation}\label{bound2}
  \left|\int_{[-1,1]^n}T_\alpha(\y) \sigma(\y)d\mu(\y) - T_\alpha(\x^*)\right| \le \frac{9n\cdot e \cdot d^2}{r(r-1)},
\end{equation}
provided that $r$ is large enough to guarantee $\frac{9d^2}{r(r-1)} \le 1/n$, e.g.\ if $r \ge 3d\sqrt{n} +1$.
Finally, if $f(\y) = \sum_{\alpha \in \N^n_d} f_\alpha T_\alpha (\y)$, then 
\begin{eqnarray*}
\left|\int_{[-1,1]^n}f(\y) \sigma(\y)d\mu(\y)-f(\x^*)\right| &=& \left|\sum_{\alpha \in \N^n_d} f_\alpha \int_{[-1,1]^n} T_\alpha(\y) \sigma(\y)d\mu(\y)-f(\x^*)\right| \\
&=& 
\left|\sum_{\alpha \in \N^n_d} f_\alpha \left(\int_{[-1,1]^n} T_\alpha(\y) \sigma(\y)d\mu(\y) -T_\alpha(\x^*)\right)\right| \\ 
&\le& 
\sum_{\alpha \in \N^n_d} \left|f_\alpha\right|  \left|\int_{[-1,1]^n} T_\alpha(\y)) \sigma(\y)d\mu(\y) -T_\alpha(\x^*)\right| \\ 
&\le & \|f\|_\cheb \cdot \frac{9n\cdot e \cdot d^2}{r(r-1)},
\end{eqnarray*}
 where the last inequality follows from \eqref{bound2}.
\end{proof}

\begin{remark}
Proposition \ref{prop:constructive proof} provides a constructive proof of the result by De Klerk and Laurent \cite[Theorem 6]{DKL MOR2}. The proof in 
\cite[Theorem 4.1]{DKL MOR2} is not constructive, and relies on known bounds for extremal roots of Jacobi polynomials. Our construction may be seen as an approximation of the eigenvector $\gamma$ in \eqref{optimal_density}, namely
\[
\gamma_\alpha \approx \frac{\prod_{i \in [n]} (1-\hat a_{\alpha_i})\widehat T_{\alpha_i}(x^*_i) }{\sqrt{\sum_{\beta \in \N^n_r} \prod_{i \in [n]} (1-\hat a_{\beta_i})^2\widehat T^2_{\beta_i}(x^*_i)}} \quad (\alpha \in \N^n_r).
\]
Thus, our approximation of $\gamma$ depends only on $f$ through its minimizer $\x^*$.
In fact, our approach to study squared kernels in this paper was motivated by the expression \eqref{optimal_density}.

In addition, Proposition \ref{prop:constructive proof} makes explicit how the error bound depends on the parameters $\|f\|_\cheb$, $n$, and $d$.
In \cite[Theorem 4.1]{DKL MOR2}, the result was simply stated as a $O(1/r^2)$ result without exploring this dependence.
\end{remark}

\section{Concluding remarks}
\label{sec:conclusion}
Our main result in Theorem \ref{thm:main} does not imply that the cone of sum-of-squares is dense in the cone of nonnegative polynomials on $[-1,1]^n$  in the $\|\cdot\|_\mon$ norm, since our error bound is formulated in terms of the $\|\cdot \|_\cheb$ norm; this is due to the equivalence relations between these norms in \eqref{norm equivalence relations2}.
In view of the known results by Berg \cite{Berg} and Lasserre \cite{doi:10.1137/04061413X}, as discussed in the introduction, it would be interesting to obtain an analogous result to Theorem \ref{thm:main} for the $\|\cdot\|_\mon$ norm.

As mentioned already, our $O(\log^3 r/r^2)$ error bound in Theorem \ref{thm:main} is tight up to the poly-logarithmic factor $\log^3 r$. It remains to be seen whether this factor may be avoided by a more careful analysis.

The error bounds for the Lasserre hierarchy of upper bounds were discussed in Section \ref{sec:A new convergence rate proof for the Lasserre spectral hierarchy of upper bounds}. Slot and Laurent \cite{Slot_Laurent_Lasserre_ub} extended these bounds to more general convex bodies than $[-1,1]^n$. Thus, it is a natural question whether our new kernel construction also extends to more convex bodies.

For the Lasserre hierarchy of lower bounds, we have obtained an $O(\log^3 r /r^2)$ error bound (see Section \ref{sec:Improved rate of convergence for the Lasserre hierarchy of lower bounds}). Similar $O(1/r^2)$ error bounds have been obtained for the sphere in \cite{10.1007/s10107-020-01537-7} and for the Euclidean ball \cite[Theorem 3]{Slot 2022}, see \cite{Laurent_Slot_Survey} for an overview of recent progress. Our improved bounds for the hypercube imply improved error bounds for the same hierarchy on general semialgebraic sets $S = \{x \in \R^n: g_i(x) \ge 0,\, i=1,\dots,m\}$ satisfying the Archimedean condition. As shown in \cite{genSem26}, from our improved bound on the hypercube, an $O(\log^3 r /r^{1/L_g})$ bound is obtained in this case, where $L_g$ is the \L{}ojasiewicz constant for $g$.

It is expected, but not known, whether the Lasserre hierarchy converges at the rate $O(1/r^2)$ for the hypercube, ball, and sphere. For the hypercube, the worst-known example, due to Baldi and Slot \cite{doi:10.1137/23M1555430}, has a convergence rate of $O(1/r^8)$. Finding an example where the Lasserre hierarchy converges at the conjectured rate would be desirable.

\bibliographystyle{plain}

\end{document}